\numberwithin{equation}{section}
\newtheorem{theorem}{Theorem}[section]
\newtheorem{proposition}[theorem]{Proposition}
\newtheorem{lemma}[theorem]{Lemma}
\newtheorem{corollary}[theorem]{Corollary}
\theoremstyle{definition}
\newtheorem{definition}[theorem]{Definition}
\newtheorem{example}[theorem]{Example}
\journal{}
\begin{document}
\begin{frontmatter}
\title{An introduction to the Equiangular algorithm}
\author[sad1]{Danial Sadeghi\corref{cor1}}
\cortext[cor1]{Corresponding author. E-mail: $\mathtt{dl.sadeghi@math.uk.ac.ir,~dl.sadeghi@gmail.com}$.}
\author[sad1]{Azim Rivaz}
\ead{arivaz@uk.ac.ir}
\address[sad1]{Department of Mathematics, Shahid Bahonar University of Kerman, Kerman 76169-14111, IRAN}
\begin{abstract}
In this paper an algorithm for constructing a set of equiangular vectors is presented. Actually, if $\{a_1,a_2,\ldots,a_k\}$ is a set of linearly independent vectors in $\mathbb{R}^n$, then the aim is to generate $k$ linearly independent vectors $\{s_1,s_2,\ldots,s_k\}$ via them, where the angles of $s_i$'s are mutually the same. Therefore a usual type of matrix decomposition is derived like the $QR$ algorithm. Then we discuss some properties of matrices derived from the new algorithm. Also the inverse and eigenvalue problems of them are studied. Then, we derive some canonical forms and applications based on the algorithm.
\end{abstract}
\begin{keyword}
Gram-Schmidt Algorithm, Equiangular matrix, Equiangular vectors, Law of cosines, Matrix Decomposition, Gram matrix\newline
\MSC[2010] 15A03, 15A09, 15A21, 15A23, 15A29, 15A30, 15B99
\end{keyword}
\end{frontmatter}
\section{Introduction} \label{intro}
We know using the Gram-Schmidt (GS) algorithm, one can construct a set of orthogonal vectors $\{q_1,q_2,\ldots,q_k\}$ via a set of linearly independent vectors successively, so that they span the same subspaces \cite{Bjorck1,Bjorck2, Schmidt, Soliverez, Terefethen}. In this paper a generalization of GS algorithm is presented. That means from $k$ linearly independent vectors $\{a_1,a_2,\ldots,a_k\}$ of a $n-$dim space $(k\leq n)$, with a prescribed cosines of the angle $\alpha=\cos\theta\in(\frac{-1}{n-1},1)$, a new set of $k$ linearly independent vectors $\{s_1,s_2,\ldots,s_k\}$ is generated through $a_i$'s, where $s_i^T s_j=\alpha, (i\neq j)$. Since for $\alpha=0$ all results overlap with GS algorithm, then we try to far from $\alpha=0$.\\

The purpose of this paper is to generalize the GS algorithm by providing an algorithm that generate linearly independent unit vectors $s_1,s_2,\ldots,s_n$ using $a_1,a_2,\ldots,a_n$, provided that $s_i^Ts_j=\alpha\in(\frac{-1}{n-1},1)$ for $i\neq j$ and $s_i^T s_i=1$. It means the $s_i$'s are equiangular. In this paper we deal with subspaces of $\mathbb{R}^n$ with the standard inner product and Euclidean norm. Throughout this paper, $\Vert X\Vert$ denotes the 2-norm of matrix $X$, $\Vert x\Vert$ denotes the Euclidean norm of vector $x$ and $\alpha=\cos\theta$. Also $e=[1,\ldots ,1]^T\in\mathbb{R}^n$ is the vector of ones and $\{e_1,e_2,\ldots ,e_n\}$ is the standard orthogonal basis of $\mathbb{R}^n$. Moreover, All matrices in this paper are real. For the sake of simplicity, we denote the transpose of the inverse of a nonsingular matrix $A$ as $A^{-T}$. Indeed, any set of equiangular vectors in $\mathbb{R}^n$ can be considered as a set of equiangular lines (ELs), but not inversely, in general. The discussion of ELs has been of interest for about sixty years of investigation. In 1973, Lemmens and Seidel \cite{Lemmens} did a comprehensive study on real equiangular line sets which is still a fundamental piece of work. In this paper, we turn our attention to equiangular vectors.
Now a general representation of an equiangular basis set in $\mathbb{R}^n$ analogous to $\{e_1,e_2,\ldots,e_n\}$ can be introduced.
\begin{proposition}
For a given $\alpha=\cos\theta\in(\frac{-1}{n-1},1)$ and $x=\frac{\sqrt{(1-\alpha)(1+(n-1)\alpha)}-1}{(1-\alpha)(n-1)-1}$, Let $v_i=xe+(1-x)e_i$ for $i=1,\ldots,n$. Then $\{s_1,\ldots,s_n\}$ are equiangular vectors with $\alpha$, where $s_i=v_i/\Vert v_i\Vert$. Moreover, if $n-1$ entries of all $s_i$ are the same as $t$, $|t|\in(0,\tfrac{1}{\sqrt{n}})$ and the last one $s=\sqrt{1-(n-1)t^2}$, then \label{P_s,t}
\begin{equation}
s=\tfrac{\sqrt{(1-\alpha)(n-1)(n-2)+n+2(n-1)\sqrt{(1-\alpha)(1+(n-1)\alpha)}}}{n} \quad ,\quad t=\tfrac{\sqrt{\alpha n+2(1-\alpha -\sqrt{(1-\alpha)(1+(n-1)\alpha)})}}{n}. \label{s,t}
\end{equation}
\end{proposition}
\begin{proof}
It can be checked that $s_{i}^{T}s_j=\dfrac{2x+(n-2)x^2}{1+(n-1)x^2}=\alpha,~(i\neq j)$. For the next part we have $s^2+(n-1)t^2=1$ and also $\alpha=2ts+(n-2)t^2$, so the Equation \eqref{s,t} is derived.
\end{proof}
\begin{proposition}
Any set of equiangular vectors $\{s_1,\ldots,s_n\}$ in $\mathbb{R}^n$ with $\alpha\in(\frac{-1}{n-1},1)$ is linearly independent.
\end{proposition}
\begin{proof}
let $x_1 s_1+\ldots+x_n s_n =0$, for arbitrary scalars $x_i$. Premultiplying this equality by $s_1^T,~s_2^T,\dots,~s_n^T$ respectively, gives a system of linear equations as $Gx=0$, where $G$ has ones on the main diagonal and $\alpha$ on all off-diagonals and $x=[x_1,x_2,\ldots,x_n]^T$. Infact, $G$ is the nonsingular Gram matrix of matrix $S=[s_1,\ldots ,s_n]$ \cite{Horn, Lanckriet}, so has no zero eigenvalue and $x\equiv0$. Note that if the upper bound of $\alpha$ holds, then $G=ee^T$ which is rank-one and singular. If the lower bound of $\alpha$ holds, then $G$ is singular with the eigenpair $(0,e)$ with algebraic multiplicity one at $0$. So the rank of both $G$, $S$ is $n-1$, so $s_1,\ldots,s_n$ are equiangular but ``linearly" dependent. Actually they are famous as equiangular lines \cite{Casazza}.
\end{proof}
This paper is organized as follows. In section \ref{equi3}, using the basis $\{e_1,e_2\ldots,e_n\}$, we construct the so-called standard equiangular basis $\{s_1,s_2\ldots,s_n\}$ with the angle $\theta$. Next we construct the equiangular vectors of $n-$dim case using a set of given linearly independent vectors. Then we can write $A=S.R$ so that the column vectors of $A$ and $S$ are the input and output vectors of the mentioned algorithm, respectively and $R$ is an upper triangular matrix and we show that the inverse of equiangular matrices can be computed with the computational complexity of $O(n^2)$. Also, we provide a proper bound for the eigenvalues of an equiangular matrix. In section \ref{schur}, we introduce some matrix factorizations based on the algorithm. Finally, In section \ref{doubequi}, we study the new special set of normal matrices named ``doubly equiangular matrices", which are equiangular like column-wise and row-wise.
\begin{figure}
\centering
\includegraphics[scale=.2]{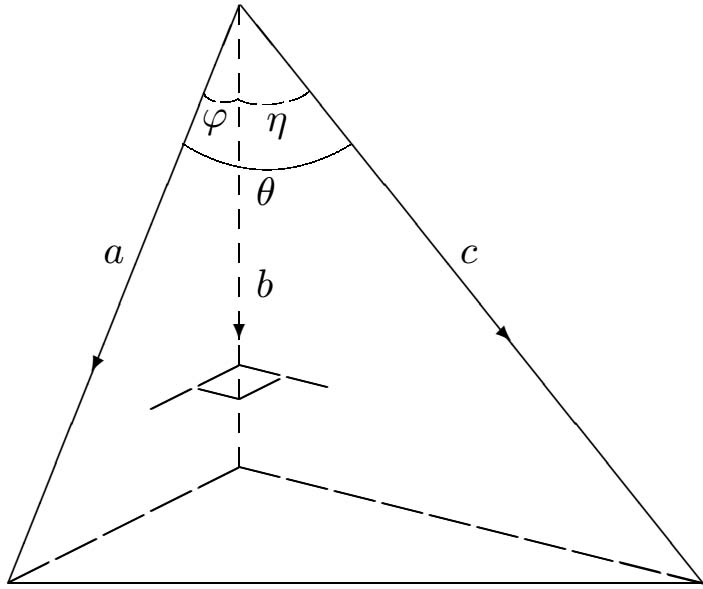}
\caption{\scriptsize{Three vectors $a,b$ and $c$ form a pyramid, so that two faces which corresponding to $\varphi$ and $\eta$ are perpendicular.}}
\label{fig1}
\end{figure}
\begin{lemma} 
Suppose that $s_1,s_2,\ldots,s_k$ are equiangular normalized vectors in $\mathbb{R}^n$ $(k\leq n)$ with $\alpha\in(\frac{-1}{n-1},1)$. Then $\Vert \sum_{i=1}^{k} s_i\Vert=\sqrt{k\left(1+(k-1)\alpha \right)}$. \label{sumsi}
\end{lemma}
\begin{proof}
The proof is clear by obtaining $\Vert \sum_{i=1}^{k} s_i\Vert^2=(\sum_{i=1}^{k} s_i)^T(\sum_{i=1}^{k} s_i)=k+k(k-1)\alpha= k(1+(k-1)\alpha)$.
\end{proof}
\begin{lemma}
The equiangular vectors $s_1,\dots , s_k$ are geven. Suppose that $\varphi$ and $\eta$, are the angles of two pair of vectors $\sum_{i=1}^{k} s_i,~s_{k+1}$ and $\sum_{i=1}^{k} s_i,~s_j$, $(1\leq j\leq k)$ respectively. Then
\begin{equation} \cos\varphi =\frac{\sqrt{k}\alpha}{\sqrt{\left(1+(k-1)\alpha \right)}}~~,~~ \cos\eta =\frac{1+(k-1)\alpha}{\sqrt{k\left(1+(k-1)\alpha \right)}}~~ \text{and}~~ \cot\varphi =\dfrac{\sqrt{k}\alpha}{\sqrt{(1-\alpha)(1+k\alpha)}}. \label{sincot}
\end{equation}
Also multiplying the above equalities together implies that
\begin{equation} \cos\varphi\cdot\cos\eta =\cos\theta =\alpha ~. \label{tri}
\end{equation} \label{Lcot}
\end{lemma}
\begin{proof}
The proof is clear from Lemma \ref{sumsi}.
\end{proof}
Indeed $\varphi$, $\eta$ and $\theta$ are the angles of a trihedral angle, in such a way that two faces corresponding to $\varphi$ and $\eta$ are perpendicular together. Actually \eqref{tri} represents the first law of cosine in the spherical pyramid \cite{Todhunter}:
\begin{lemma}
Let $a$, $b$ and $c$ are three linearly independent vectors in $\mathbb{R}^n$ $(n\geq 3)$ which make a trihedral angle with the angles $\varphi$, $\eta$ and $\theta$ and also the dihedral angle between two faces opposite to $\theta$ is $\pi/2$ as illustrated in Figure \ref{fig1}. Then $\cos \theta =\cos \varphi~\cos \eta$.
\end{lemma}
\begin{proof}
\begin{equation}
\cos \theta =\frac{a^Tc}{\Vert a\Vert~\Vert c\Vert}=\frac{\left(\text{proj}_ba+(a-\text{proj}_ba)\right)^T \left(\text{proj}_b c+(c-\text{proj}_bc)\right)}{\Vert a\Vert~\Vert c \Vert}=\frac{(\text{proj}_b a)(\text{proj}_b c)}{\Vert a\Vert ~\Vert c\Vert}= \cos\varphi~\cos\eta ~.
\end{equation}
\end{proof} 
\begin{lemma}
The equiangular vectors $s_1,\dots , s_k$ are geven. If $\mathbf{\prod}_k$ is the set of all vectors in $\mathbb{R}^{k+1}$ which make the same angle with $s_i$'s, then $\mathbf{\prod}_k$ is a subspace with dim$(\mathbf{\prod}_k)=2$. \label{Ldim2}
\end{lemma}
\begin{proof}
The set $\mathbf{\prod}_k$ can be illustrated as follows
\begin{equation}
\mathbf{\prod}_k=\bigcap~\big\{v~|~v\in\text{span}\langle s_1,\ldots, s_k,a_{k+1}\rangle, v\perp(s_i-s_j)~;~1\leq i<j\leq k\big\}. \label{P_k}
\end{equation}
So it can be seen that $\mathbf{\Sigma}_k =\langle\bigcup\{s_i -s_j\}\rangle$ is the orthogonal complement of $\mathbf{\prod}_k$. We claim that the equiangular set $\mathcal{S}_{\mathbf{\Sigma}_k}=\{s_1-s_2,s_1-s_3,\ldots,s_1-s_k\}$ can be considered as a basis of $\mathbf{\Sigma}_k$. To check the linearly independence let $\sum_{i=2}^k x_{i}(s_1-s_i)=0$ for arbitrary $x_i$. Because of linearly independence of $s_1$ through $s_k$, $(\sum_{i=2}^k x_{i})s_1 -\sum_{i=2}^k x_i s_i=0$ implies $x_i=0$ for each $i$. To prove ${\mathbf{\Sigma}_k}= \text{span}\langle\mathcal{S}_{\mathbf{\Sigma}_k}\rangle$ it is enough to show that ${\mathbf{\Sigma}_k}\subseteq\text{span}\langle\mathcal{S}_ {\mathbf{\Sigma}_k}\rangle$. Let $w\in{\mathbf{\Sigma}_k}$, then from the definition
\begin{equation}
w=\sum_{i<j}^{k}x_{i,j}(s_i-s_j)=\sum_{i<j}^{k}x_{i,j}\left[(s_1-s_j)-(s_1-s_i)\right]=\sum_{i=2}^k y_{i}(s_1-s_i),
\end{equation}
so $w\in\text{span}\langle \mathcal{S}_ {\mathbf{\Sigma}_k} \rangle$. This argument yields that dim$({\mathbf{\Sigma}_k})=k-1$, that implies dim$(\mathbf{\prod}_k)=k+1-(k-1)=2$ and $\mathbf{\prod}_k$ is a plane.
\end{proof}
\section{An algorithm for producing a set of equiangular vectors}\label{equi3}
Suppose that $\{a_1,a_2,\ldots,a_n\}$ is a set of given linearly independent vectors in $\mathbb{R}^n$. In this section we drive a set of $n$ equiangular vectors with angle $\theta\in(0,\arccos(\frac{-1}{n-1}))$ using the input vectors. First, we normalize $a_1$ as $s_1=\Vert a_1\Vert^{-1} a_{1}$~. Suppose that the vector $a_2$ makes an acute angle with $a_1$. Fom Fig. \ref{fig2} we have $u_2=a_2-s_1s_1^Ta_2$, also $b=\cot\theta~\Vert u_2\Vert~s_1$ and $v_2=a_2-(s_1s_1^Ta_2-b)$. Therefore, $v_2=u_2+\cot\theta~\Vert u_2\Vert~s_1$ . If we let $q_2 =\Vert u_2\Vert ^{-1}u_2$ which is orthogonal to $s_1$, then we have
\begin{equation}
v_2=q_2+\cot\theta ~s_1, \label{2dim}
\end{equation}
so that the next normalized equiangular vector spanned by $\{ a_1,a_2\}$ is $s_2=v_2/\Vert v_2\Vert$. For the case of obtuse angles between $a_1$ and $a_2$ the same method with some slight differences can be derived so that $\cot\theta<0$. The main theorem is illustrated as follows
\begin{figure}
\centering
\includegraphics[scale=.27]{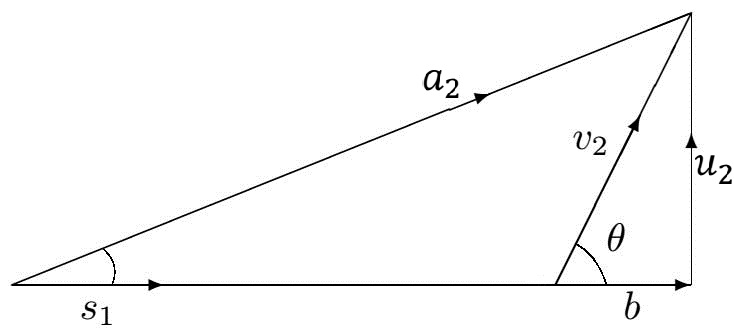}
\caption{}
\label{fig2}
\end{figure}
\begin{theorem}
Suppose that $a_1,\ldots,a_n\in\mathbb{R}^n$ are the linearly independent vectors. For a given $\alpha=\cos\theta\in(\frac{-1}{n-1},1)$, the equiangular vectors $s_1,s_2$ generated via $a_1,a_2$ by \eqref{2dim} are proposed. If $q_{k+1}\in span\langle s_1,\ldots,s_k,a_{k+1}\rangle$ is the normalized orthogonal vector to the vectors $s_1,~s_2,\dots,~s_k$, $(2\leq k< n)$. Then the $(k+1)$th equiangular vector $s_{k+1}=v_{k+1}/\Vert v_{k+1}\Vert$ is obtained as follows
\begin{equation}
v_{k+1}=q_{k+1}+\left(\tfrac{\sqrt{k}\alpha}{\sqrt{(1-\alpha)(1+k\alpha)}}\right)\frac{\sum_{i=1}^{k}s_{i}}{\Vert\sum_{i=1}^{k}s_{i}\Vert}.
\end{equation}
\end{theorem}
\begin{proof}
From Lemma \ref{Ldim2}, $\mathbf{\prod}_2=\{v~|~v\in\text{span}\langle s_1,s_2,a_3\rangle, v\perp (s_1-s_2)\}$ and $s_1-s_2$ is the orthogonal complement of $\mathbf{\prod}_2$ with dim$(\mathbf{\prod}_2)=2$. Let $\bar{a}_3$ be the projection of $a_3$ onto $\mathbf{\prod}_2$. Indeed it makes the same angle with $s_1,s_2$. We have
\begin{equation} \bar{a}_3=\text{proj}_{\mathbf{\prod} _2} {a_3}=a_3-\text{proj}_{s_1-s_2} {a_3} \label{pro}
\end{equation}
From \eqref{tri}, the third equiangular vector $v_3$ with $s_1$ and $s_2$, definitely lies in $\mathbf{\prod}_2$ ($v_3\in \mathbf{\prod}_2$). Let $\varphi$ be the angle between $v_3$ and $s_1+s_2$. Moreover, $\cot\theta$ in \eqref{2dim} is replaced by $\cot\varphi$. From Lemma \ref{Lcot}, $\cot\varphi=\dfrac{\sqrt{2}\alpha}{\sqrt{(1-\alpha)(1+2\alpha)}}$.\\

It is supposed to derive $v_3$ using $\bar{a}_3$. In comparison with the previous step \eqref{2dim}, $\bar{a}_3$ and $s_1+s_2$ can be considered as $a_2$ and $s_1$, respectively. Let $u_3=\bar{a}_3-\dfrac{(s_1+s_2)(s_1+s_2)^T\bar{a}_3}{\Vert s_1+s_2 \Vert^{2}}$. Then from \eqref{pro}
\begin{align}
u_3&=a_3-\text{proj}_{s_1-s_2}{a_3}-\frac{(s_1+s_2)(s_1+s_2)^T(a_3-\text{proj}_{s_1-s_2}{a_3})}{\Vert s_1+s_2\Vert ^{2}}\nonumber\\&=a_3-\text{proj}_{s_1-s_2}{a_3}-\dfrac{(s_1+s_2)(s_1+s_2)^Ta_3}{\Vert s_1+s_2 \Vert^{2}}=a_3-(\text{proj}_{s_1-s_2}{a_3}+\text{proj}_{s_1+s_2}{a_3}).
\end{align}
Note that $u_3$ is orthogonal to $s_1,s_2$ because $s_1-s_2\perp s_1+s_2$. Therefore from \eqref{2dim} the desired vector $v_3$ is in direction to $u_3+\cot\varphi ~\Vert u_3\Vert ~\dfrac{s_1+s_2}{\Vert s_1+s_2\Vert}$. Without loss of generality we can write
\begin{equation}
v_3=q_3 +\cot\varphi ~\dfrac{s_1+s_2}{\Vert s_1+s_2\Vert}, \label{3dim}
\end{equation}
where $q_3=\Vert u_3\Vert ^{-1}u_3$ which is orthogonal to $s_1 ,s_2$. So the next normalized equiangular vector to $s_1,s_2$ spanned by $\{ s_1,s_2,a_3\}$ is $s_3=v_3/\Vert v_3\Vert$. By induction After $k$ steps $(k\geq 3)$, for obtaining $s_{k+1}$ we define $\mathbf{\prod}_k$ as \eqref{P_k}. Like the previous steps the desired vector $\bar{a}_{k+1}$ which is the projection of $a_{k+1}$ onto the subspace $\mathbf{\prod}_k$ and then the equiangular vector $s_{k+1}$ with $s_1,~s_2,~,\dots,s_k$, will be obtained after. For simplicity, we project $a_{k+1}$ onto the orthogonal subspace of $\mathbf{\prod}_k$, say $\mathbf{\Sigma}_k =\langle\bigcup\{s_i -s_j\}\rangle$, then using \eqref{pro} in case of $(k+1)-$dim, implies
\begin{equation}
a_{k+1}= \text{proj}_{\text{span}\langle a_1,\ldots ,a_{k+1}\rangle}{a_{k+1}}= \text{proj}_{\mathbf{\Sigma}_k \oplus \mathbf{\prod}_k}{a_{k+1}}=\text{proj}_{\mathbf{\Sigma}_ k}{a_{k+1}}+\text{proj} _{\mathbf{\prod}_k}{a_{k+1}}= \text{proj}_{\mathbf{\Sigma}_ k}{a_{k+1}}+\bar{a}_{k+1},
\end{equation}
and then
\begin{equation} 
\bar{a}_{k+1} =a_{k+1}-\text{proj}_{\mathbf{\Sigma}_k}{a_{k+1}}. \label{bar}
\end{equation}
For computing $\text{proj}_{\mathbf{\Sigma}_k}{a_{k+1}}$ we derive an orthogonal basis $\mathcal{B}_{\mathbf{\Sigma}_k}=\{ \mathcal{B}_{i}\}_{i=1}^{k-1}$ of ${\mathbf{\Sigma}_k}$ from the basis $\mathcal{S}_{\mathbf{\Sigma}_k}= \{\mathcal{S}_{i}\}_{i=1}^{k-1}=\{s_1-s_2,s_1-s_3,\ldots,s_1-s_k\}$ as illustrated in Lemma \ref{Ldim2}. It can be checked that ${\mathbf{\Sigma}_k}$ is an equiangular set by the cosines of the angle $1/2$. We define an upper triangular matrix $\mathcal{R}_k=(r_{ij})$ of size $k-1$ as $r_{ii}=i$ and $r_{ij}=-1~(i<j)$. Now it can be checked that if $\mathcal{R}_k$ is multiplied by the vectors of $\mathcal{S}_{\mathbf{\Sigma}_k}$ from right, then the column vectors of obtained matrix is orthogonal: (This is mentioned in the Example \ref{ExQR} of the third section).
\begin{flushleft}
If $k=2$, then $\mathcal{B}_ {\mathbf{\Sigma}_2}=\mathcal{S} _{\mathbf{\Sigma}_2}= \{s_1-s_2\}$.\\
If $k=3$, then $\mathcal{B}_ {\mathbf{\Sigma}_3}=\{s_1-s_2, s_1+s_2-2s_3\}$.\\
If $k=4$, then $\mathcal{B}_ {\mathbf{\Sigma}_4}=\{s_1-s_2, s_1+s_2-2s_3, s_1+s_2+s_3-3s_4\}$.\\
\end{flushleft}
The general form of $\mathcal{B}_ {\mathbf{\Sigma}_k}$ for $k=2,\ldots ,n-1$ is illustrated as follows
\begin{equation} 
\mathcal{B}_{\mathbf{\Sigma}_k}=\big \{s_1-s_2,s_1+s_2-2s_3~,\ldots,~\sum_{j=1}^{k-1}s_{j} -(k-1)s_k\big \}.
\end{equation}
From \eqref{bar} $\bar{a}_{k+1}=a_{k+1}-\sum_{i=1}^{k-1}\text{proj}_ {\mathcal{B}_i}{a_{k+1}}$. Moreover from Lemma \ref{Lcot}, $\cot\varphi =\dfrac{\sqrt{k}\alpha}{\sqrt{(1-\alpha)(1+k\alpha)}}$.
\\Let $u_{k+1}=a_{k+1}-\left(\text{proj}_{\sum_{i=1}^ {k}s_{i}}{a_{k+1}}+\sum_{i=1}^{k-1} \text{proj}_{\mathcal{B}_i}{a_{k+1}} \right)$. Note that $\{\mathcal{B}_1,\ldots ,\mathcal{B}_{k-1},\sum _{i=1}^k s_i\}$ is also a set of orthogonal vectors with the same span as $\{s_1,\ldots ,s_k\}$. Then $u_{k+1}$ is orthogonal to $s_i$'s. Now we substitute $\sum_{i=1}^{k}s_i$ and $\sum_{i=1}^{k-1}\text{proj}_{\mathcal{B} _i}{a_{k+1}}$ instead of $s_1+s_2$ and $\text{proj}_{s_1-s_2}{a_3}$ in \eqref{3dim}, respectively to obtain $v_{k+1}$ in this case. So we have $v_{k+1}=u_{k+1}+\cot\varphi ~\Vert u_{k+1}\Vert ~ \dfrac{\sum_{i=1}^{k}s_{i}}{\left\Vert\sum_{i=1}^{k}s_{i}\right\Vert}$ and without loss of generality 
\begin{equation}
v_{k+1}=q_{k+1}+\cot\varphi ~\frac{\sum_{i=1}^{k}s_{i}}{\left\Vert\sum_{i=1}^{k}s_{i}\right\Vert}, \label{k+1}
\end{equation}
where $q_{k+1}=\Vert u_{k+1}\Vert ^{-1}u_{k+1}$ is orthogonal to $s_1,\ldots ,s_k$. The $(k+1)$th normalized equiangular vector spanned by $\{a_1,\ldots ,a_k\}$ is $s_{k+1}=v_{k+1}/\Vert v_{k+1}\Vert$. We can proceed this method to obtain $n$ equiangular vectors in $\mathbb{R}^n$.
\end{proof}
\begin{algorithm}
\caption{.~(Equiangular Algorithm). This algorithm produces equiangular vectors $s_i\in\mathbb{R}^n$ with the cosine of the angle $\alpha\in(\frac{-1}{n-1},1)$ via the linearly independent vectors $a_k$, for $k=1:n$}
\label{algo:equ}
\begin{algorithmic}[1]
\STATE $s_1=\tfrac{a_1}{\Vert a_1\Vert}~;~\bar{s}=\text{zeros}(n,1)~;~r=0~;$
\FOR {$~k=2:n$}
\STATE $\bar{s}=\bar{s}+ s_{k-1}~;~ s=\tfrac{\bar{s}}{\Vert\bar{s}\Vert}~;$
\STATE
$q=\Vert v_{k}-\left(s s^{T}a_k+r\right)\Vert ^{-1}\cdot\left(a_{k}-(ss^{T}a_k+r)\right)~;$
\STATE $v_{k}=q+ \tfrac{\sqrt{k}\alpha}{\sqrt{(1-\alpha)(1+k\alpha)}}s~;$
\STATE $s_{k}= \tfrac{v_k}{\Vert v_k\Vert}~;~r=0~;$
\STATE $\mathcal{B}_{k-1}= \left(\sum_{j=1}^{k-1} s_{j}\right)-(k-1) s_k~;$
\FOR {$~i=1:k-1$}
\STATE $r=r+\tfrac{\mathcal{B}_i{\mathcal{B}_i}^{T}a_k}{{\Vert \mathcal{B}_i\Vert}^2}~;~$
\ENDFOR
\ENDFOR 
\end{algorithmic}
\end{algorithm}
We present an algorithm named Equiangular Algorithm (EA) for this method. The outcome matrix of this method $S=[s_1,\ldots,s_m]$ of size $n\times m$, is called $\mathit{Equiangular~matrix}$. We denote the set of all full-rank equiangular matrices of size $n\times m$ with $\alpha=\cos\theta$, $\alpha\in(\frac{-1}{n-1},1)$ by $EM^{n\times m}_\alpha$ and also the corresponding nonsingular square ones by $EM^n_\alpha$.
\\\textbf{Orthogonal vectors.} The formula \eqref{k+1} reduces to the orthogonalization process if $\alpha =0$. In this case at the step k+1, we have
\begin{equation}
a_{k+1}=a_{k+1}-(\text{proj}_ {\mathcal{B}_1}{a_{k+1}}+ \ldots +\text{proj}_{\mathcal {B}_{k-1}}{a_{k+1}}+ \text{proj}_{\mathcal{\bar B}_k}{a_{k+1}}),\quad k=1,2,\ldots ,n-1, \label{alpha=0}
\end{equation}
where $\mathcal{\bar B}_k=\sum_ {i=1}^{k}s_i$. It can be said that the normalized orthogonal vector $s_{k+1}$ in case of $\alpha=0$, is constructed using the set of orthogonal vectors $\{\mathcal{B}_1,\cdots,\mathcal{B}_{k-1},\mathcal{\bar B}_k\}$ instead of the previous set of orthonormal vectors $\{s_1,\ldots ,s_k\}$ which is unlike the Gram-Schmidt algorithm.
\\Below is an example for outcome of this algorithm using MATLAB.
\begin{example}
For the Vandermonde matrix $A=\left[\footnotesize{\begin{array}{cccc}1&1&1&1\\1&2&4&8\\1&3&9&27\\1& 4&16&64\\\end{array}}\right]$, applying EA with $\theta _1=\pi/3$ and $\theta _2=\pi/4$ gives $S_1=\left[\footnotesize{\begin{array}{cccc} 0.5&-0.3309&~~0.4646&-0.3352\\ 0.5&~~0.0564&-0.2228&~~0.4469\\0.5& ~~0.4436&-0.0937&-0.7428\\0.5&~~
0.8309&~~0.8519&-0.3689\end{array}}\right]$ and $S_2=\left[\footnotesize{\begin{array}{cccc}0.5&-0.1208&~~0.4789&-0.3889\\0.5& ~~0.1954&-0.0337&~~0.2190\\0.5&~~0.5117& ~~0.0972&-0.7376\\0.5&~~0.8279&~~0.8718& -0.5067\end{array}}\right]$, respectively. \quad$\lozenge$
\end{example}
The process of producing the equiangular vectors provides a matrix factorization by generating an upper triangular matrix within EA algorithm, like QR decomposition.
\begin{theorem}
Suppose $A\in\mathbb{R}^{n\times m}$  has full-rank $(m\leq n)$. For given $\alpha\in(\frac{-1}{n-1},1)$ there is an equiangular matrix $S\in EM^{n\times m}_\alpha$ and an upper triangular $R\in\mathbb{R}^{m\times m}$ so that $A=SR$. \label{SR}
\end{theorem}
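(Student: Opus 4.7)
The plan is to obtain the factorization $A=SR$ exactly in the way that the classical Gram-Schmidt process yields the $QR$ decomposition: apply Algorithm \ref{algo:equ} to the columns $v_1,\ldots,v_n$ of $A$ with the prescribed value $\theta=\arccos\alpha$, form $S=[s_1,\ldots,s_n]$, and read the coefficients expressing each $v_k$ in the basis $\{s_1,\ldots,s_k\}$ as the $k$-th column of $R$. By construction $S\in S_\alpha$, so everything reduces to proving (i) that the algorithm goes to completion, and (ii) that $v_k\in\text{span}\{s_1,\ldots,s_k\}$ with a nonzero coefficient on $s_k$ at every step.

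For (ii) I would argue by induction on $k$. The base case $v_1=\|v_1\|\,s_1$ is immediate from line 1 of the algorithm. For the inductive step, by construction
\[
v=v_{k+1}-\Bigl(\text{proj}_{\sum_{i=1}^{k}s_i}v_{k+1}+\sum_{i=1}^{k-1}\text{proj}_{\mathcal{B}_i}v_{k+1}\Bigr),
\]
and since each $\mathcal{B}_i$ and $\sum_{i=1}^{k}s_i$ lies in $\text{span}\{s_1,\ldots,s_k\}$, the vector $v$ differs from $v_{k+1}$ only by an element of that span. Formula \eqref{k+1} then gives
\[
\hat v_{k+1}=\|v\|^{-1}v+\cot\varphi\,\frac{\sum_{i=1}^{k}s_i}{\|\sum_{i=1}^{k}s_i\|},
\qquad s_{k+1}=\hat v_{k+1}/\|\hat v_{k+1}\|,
\]
so $s_{k+1}=c\,v_{k+1}+w$ with $c=\bigl(\|\hat v_{k+1}\|\,\|v\|\bigr)^{-1}\neq 0$ and $w\in\text{span}\{s_1,\ldots,s_k\}$. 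Solving for $v_{k+1}$ expresses it as a combination of $s_1,\ldots,s_{k+1}$ in which the coefficient of $s_{k+1}$ is $1/c\neq 0$. Collecting these coefficients into the columns of $R$ gives an upper triangular matrix with nonzero diagonal, and $A=SR$.

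The one point that actually needs the full-rank hypothesis — and is the main place where the proof could break — is ensuring that $v\neq 0$ so that $q_{k+1}$ (hence $\hat v_{k+1}$) is well defined. If $v=0$ then $v_{k+1}\in\text{span}\{s_1,\ldots,s_k\}=\text{span}\{v_1,\ldots,v_k\}$ by the inductive hypothesis, contradicting the linear independence of the columns of $A$. Once $v\neq 0$, the decomposition $\hat v_{k+1}=q_{k+1}+\cot\varphi\,(\sum s_i)/\|\sum s_i\|$ is an orthogonal sum (the first lemma of Section \ref{equi3} and the construction of $q_{k+1}$ guarantee $q_{k+1}\perp s_i$ for $i\le k$), so $\|\hat v_{k+1}\|^2=1+\cot^2\varphi>0$ and the normalization producing $s_{k+1}$ is legitimate. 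Since $\alpha\in(0,1)$ implies $\theta\in(0,\pi/2)$ and $k\le n-1$, the formula $\cot\varphi=\sqrt{k/((\sec\theta-1)(\sec\theta+k))}$ is finite throughout, so no degeneracy arises from the angle side either, and the induction closes.
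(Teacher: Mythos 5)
Your proof is correct and follows essentially the same route as the paper: run Algorithm \ref{algo:equ} on the columns of $A$, express each column as a combination of $s_1,\ldots,s_k$ with a nonzero coefficient on $s_k$, and collect the coefficients into an upper triangular $R$. You are in fact somewhat more careful than the paper's own proof, which does not explicitly invoke the full-rank hypothesis to guarantee $v\neq 0$ at each step or verify that the normalizations defining $q_{k+1}$ and $s_{k+1}$ are legitimate.
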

\begin{proof}
In step $k$ ($k<m$) of EA which is applied to the column vectors of $A=[a_1,\ldots,a_m]$, we derive a representation of $v_k$ followed by \eqref{k+1} as $\Vert v_k \Vert\cdot s_k=a_k+\sum_{i=1}^{k-1} d_{i} s_{i}$. So
\begin{equation}
a_{k}=-\sum_{i=1}^{k-1}d_{i}s_{i}+\Vert v _k\Vert s_{k}=[s_1,\ldots ,s_k]{\small\begin{bmatrix}-d_1\\ \vdots\\-d_{k-1}\\\Vert v_{k}\Vert\end{bmatrix}}=S_k R_{k}~.
\end{equation}
By EA $S_k$ can be extended as $S=\begin{bmatrix}S_k,\bar{S}_{m-k}\end{bmatrix}$, where $\bar{S}_{m-k}$ is a $n\times (m-k)$ equiangular matrix and then
\begin{equation}
a_k=\begin{bmatrix}S_k,\bar{S}_{m-k}\end{bmatrix} \begin{bmatrix} R_k\\0\end{bmatrix}=S\begin{bmatrix} R_k \\0\end{bmatrix}.
\end{equation}
This is the $k$th column vector in both sides of $A=SR$.
\end{proof}
\begin{example}
For the Minij matrix $A=\left[\footnotesize{\begin{array}{cccc}1&1&1&1\\1&2&2&2\\1&2&3&3\\1&2 &3&4\end{array}}\right]$, Theorem \ref{SR} results $A=S_1 R_1$ by $\theta _1=\pi/6$ where\\ $S_1=\left[\footnotesize{\begin{array}{cccc}0.5&0.0000& 0.2321&0.2321\\0.5&0.5774&0.1384&0.3854\\ 0.5&0.5774&0.6808&0.2603\\0.5&0.5774& 0.6808&0.8543\end{array}}\right]$ and $R_1=\left[\footnotesize{\begin{array}{cccc}2.0000&2.0000&1.1444&0.1830\\0~\qquad& 1.7321&2.0312&1.6471\\0~\qquad&0~\qquad& 1.8436&2.2317\\0~\qquad&0~\qquad&0~\qquad& 1.6834\end{array}}\right]$ and also results $A=S_2 R_2$ by $\theta _2=3\pi/8$ where $S_2=\left[\footnotesize {\begin{array}{cccc} 0.5&-0.6088~~&-0.0301~~&-0.0301~~\\ 0.5&0.4580&-0.4597~~&0.1080\\0.5&0.4580& 0.6276&-0.2691~~\\0.5&0.4580&0.6276&0.9566 \end{array}}\right]$ and $R_2=\left[\footnotesize{\begin{array}{cccc}2.0000&3.1413&3.6476&3.7239\\0~\qquad& 0.9374&1.3078&1.3161\\0~\qquad&0~\qquad&0.9197& 1.2027\\0~\qquad&0~\qquad&0~\qquad&0.8159 \end{array}}\right].$~ $\lozenge$
\end{example}
\begin{corollary}
Any symmetric positive definite (spd) matrix $A$ can be factorized as a Cholesky factorization plus a rank-one matrix.
\end{corollary}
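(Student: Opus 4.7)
The plan is to reduce the corollary to the SR factorization (Theorem~\ref{SR}) applied to a square-root factor of $A$ rather than to $A$ itself. Since $A$ is SPD we may write $A=B^TB$ for some nonsingular $B$ (for instance $B=L^T$, with $L$ the Cholesky factor of $A$, or $B=A^{1/2}$). Being nonsingular, $B$ has full rank, so by Theorem~\ref{SR} there exist, for any prescribed $\alpha\in(0,1)$, an equiangular matrix $S\in S_\alpha$ and an upper triangular $R$ with $B=SR$. Hence $A=B^TB=R^TS^TSR$.

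Next I would invoke the Gram structure of an equiangular matrix: since each column of $S$ has unit norm and any two distinct columns have inner product $\alpha$, one has $S^TS=(1-\alpha)I+\alpha ee^T$, where $e$ is the all-ones vector. Substituting and distributing yields
\begin{equation*}
A=(1-\alpha)R^TR+\alpha(R^Te)(R^Te)^T.
\end{equation*}

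Setting $\tilde L=\sqrt{1-\alpha}\,R^T$ and $u=\sqrt{\alpha}\,R^Te$, this reads $A=\tilde L\tilde L^T+uu^T$. Because $R$ is upper triangular with positive diagonal ($r_{ii}=\|\hat v_i\|>0$ by construction in Algorithm~\ref{algo:equ}), $\tilde L$ is lower triangular with positive diagonal, so $\tilde L\tilde L^T$ is the (genuine) Cholesky factorization of the SPD matrix $A-uu^T$; the summand $uu^T$ is a symmetric rank-one matrix, completing the decomposition.

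The only real obstacle is the choice of starting point: applying SR directly to $A$ would produce the Gram-style identity for $A^TA=A^2$ rather than for $A$, so one must apply SR to a square-root factor $B$ of $A$ instead. The positivity of the diagonal of $R$, needed to call $\tilde L\tilde L^T$ a proper Cholesky factor, is immediate from Algorithm~\ref{algo:equ}; observe also that as $\alpha\to 0$ the rank-one term $uu^T$ vanishes and we recover the ordinary Cholesky factorization, which is consistent with the fact that the SR factorization degenerates to QR in that limit.
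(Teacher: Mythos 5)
Your proposal is correct and follows essentially the same route as the paper: write $A=B^TB$ with $B$ nonsingular, apply the SR factorization to $B$, and use $S^TS=(1-\alpha)I+\alpha ee^T$ to obtain $A=(1-\alpha)R^TR+\alpha(R^Te)(R^Te)^T$. Your added remarks on the positivity of the diagonal of $R$ and on the limit $\alpha\to 0$ are sensible but not part of the paper's (shorter) argument.
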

\begin{proof}
Suppose $A\in\mathbb{R}^{n\times n}$ is a real spd matrix, then $A$ can be written as $B^{T}B$, where $B$ is a nonsingular matrix. SR factorization by Theorem \ref{SR} implies 
\begin{align}
A=&B^{T}B=R^{T}S^{T}SR=R^{T}{\footnotesize \left[\begin{array}{ccc}1&\cdots &\alpha\\\vdots &\ddots &\vdots\\\alpha &\cdots &1\end{array} \right]}R=R^{T}\left[(1-\alpha)I_n+\alpha ee^T\right] R\nonumber\\=&(1-\alpha)R^{T}R+ \alpha(R^{T}e)(R^{T}e)^T=Cholesky+rank\text{-}one.
\end{align}
\end{proof}
It is notable that this factorization is not unique with respect to a fixed angle. Actually, the number of SR factorization of a matrix $A\in\mathbb{R}^{n\times m}$ of full rank, is $2^{n-1}$. Because by altering the sign of vector $q_2$ in $\eqref{2dim}$ there are two choices for constructing $s_2$ in step 2. Also by altering the sign of vector $q_3$ in \eqref{3dim} there are two choices for constructing $s_3$ in step 3. Therefore we have two choices in each step until the $n$th vector $s_n$ is derived.
Now we discuss the inverse and eigenvalue problems of equiangular matrices. We need to introduce the so-called $\mathit{Gram~matrix}$ \cite{Godsil,Horn}. 
\begin{definition}
If $A=[a_1,\ldots,a_m]$, then the matrix $G=A^T A$, is called the $\mathit{Gram~matrix}$ of $A$.\qquad\qquad\qquad\quad $\lozenge$ \label{defG}
\end{definition}
Suppose that $S=[s_1,\ldots,s_n]\in \text{EM}^n_\alpha$, and then we define $G_\alpha$ as
\begin{equation}
S^{T}S={\footnotesize \left[\begin{array}{cccc}1&\alpha &\cdots &\alpha\\\alpha &1&\cdots &\alpha\\\vdots &\vdots &\ddots &\vdots\\\alpha &\alpha &\cdots &1\end{array}\right]} \label{I},
\end{equation}
which is the Gram matrix of $S$. Since for $x\neq 0$ we have $x^TG_\alpha x=\Vert Sx\Vert^2> 0$, then $G_\alpha$ is positive definite. $G_\alpha$ can be rewritten as $I+\alpha\mathcal{S}$, where $\mathcal{S}$ has zeros on its main diagonal and ones on all off-diagonals and is a special case of the $\mathit{Seidel~matrix}$. 
Since the fast matrix inversion is important in Linear Algebra, then we try to derive $S^{-1}$, rapidly.
\begin{proposition}
If $S\in EM_\alpha^n$, then $S^{-1}=\beta G_{\alpha^\prime}S^T$ and can be computed with $O(n^2)$ arithmetic operations where \label{invS}
\begin{equation}
\beta=\frac{1+(n-2)\alpha}{(1-\alpha)\big(1+(n-1)\alpha\big)}\qquad,\qquad {\alpha^{\prime}}=\frac{-\alpha}{1+(n-2)\alpha}~. \label{alphbet}
\end{equation}  
\end{proposition}
\begin{proof}
It is clear to see that $S^{-1}S^{-T}=G_\alpha^{-1}=\beta G_{\alpha^\prime}$, where $G_{\alpha^\prime}$ is the another Gram matrix with the mentioned $\alpha^\prime$. On the other hand let $[S]_{ij}=s_{ij}$, then it can be checked that $[S^{-1}]_{ij}=\beta(s_{ji}+\alpha^{\prime} \cdot\sum_{k\not =i}s_{jk})$. Therefore, $S^{-1}$ can be computed with $O(n^2)$ arithmetic operations.
\end{proof}
If $\alpha$ is zero, then $\beta=1$ and $G_{\alpha'}=G_0=I$, so $S^{-1}=S^T$, which shows that the result is true in case of orthogonality.
\begin{corollary}
Let $S=[s_1,\ldots,s_n]\in \text{EM}^n_\alpha$ by $\alpha\in(0,1)$, then the rows of $S^{-1}=[s_1'^T,\ldots,s_n'^T]^T$ are equiangular with $\cos\theta'=\alpha'\in(\frac{-1}{n-1},0)$, where $\Vert s_{i}'\Vert=\sqrt{\beta}$ and vice versa. Moreover, the cosine of the angle between $s_i$ and $s_{i}'$ is $1/\sqrt{\beta}$, where $\alpha',\beta$ are defined in \eqref{alphbet}.          \label{Tinv}
\end{corollary}
\begin{proof}
If $\tau_i$ is the angle between the vectors $s_i, s_{i}^{\prime T}$, then $1=s_{i}'\cdot s_i=\Vert s_{i}'\Vert\cos\tau_i$, so $\cos\tau_{i}=1/\Vert s_{i}'\Vert$ and $\tau_i\in(0,\pi/2)$. Now two vectors $s_i$ and $s_{i}^{\prime T}$ make the angles $\theta$ and $\pi/2$ with all vectors of the set $\tilde{S}_{i}=\{s_j\}_{j\neq i}$, respectively. Therefore, we can introduce the subspace $\mathbf{\prod}_{n,i}$ with respect to $i$ which is similar to $\mathbf{\prod}_k$ in \eqref{P_k} in which all the vectors have the same angle with the vectors of $\tilde{S}_i$, mutually. So $s_i,s_{i}^{\prime T}\in \mathbf{\prod}_{n,i}$. On the other hand, $z_i=\sum_{j\neq i}s_{j}\in\mathbf{\prod} _{n,i}$. Since $\mathbf{\prod}_{n,i}$ is a plane, then $z_i \in\text{span}\langle s_i,s_{i}^{\prime T}\rangle$. Clearly, $s_{i}^{~\prime T}\perp z_i$. If $\varphi_i$ is the angle between $s_i,z_i$, then from Pythagoras Theorem, $1=\cos^{2}\varphi_i+\cos^{2}\tau_i =(n-1)\alpha^2/\big(1+ (n-2)\alpha\big)+1/\Vert s_{i}^{\prime}\Vert^2$, thus
\begin{equation} 
\Vert s_{i}^{~\prime}\Vert =\sqrt{\frac{1+(n-2)\alpha}{(1-\alpha)\left(1+(n-1)\alpha\right)}}= \sqrt\beta~,\quad i=1,\ldots ,n\label{ninv}
\end{equation}
Taking inner product of rows in two sides of $S^{-1}=\beta G_{\alpha'}S^T$ gives
\begin{align} 
s_{i}'s_{j}^{\prime T} 
&=\beta^{2}[\alpha'\cdots \substack{i\text{th}\\1\\~}\cdots \alpha']~
G_\alpha~ 
[\alpha'\cdots \substack{j\text{th}\\1\\~}\cdots \alpha']^T\nonumber\\& =\beta^{2}\big[\left(\alpha +\alpha'+(n-2)\alpha \alpha'\right)[1\cdots \substack{i\text{th}\\0\\~}\cdots 1]+\left(1+(n-1)\alpha\alpha'\right)[0\cdots\substack{i\text{th} \\1\\~}\cdots0]\big][\alpha'\cdots\substack{j\text{th} \\1\\~}\cdots\alpha ']^T\nonumber\\&=\beta^{2} \big[\left(\alpha+\alpha'+(n-2)\alpha\alpha'\right)((n-2)\alpha'+1)+ \left(1+(n-1)\alpha\alpha' \right)\alpha'\big]=\frac{-\alpha}{(1-\alpha)\left(1+(n-1)\alpha\right)}~.\label{sipsjp}
\end{align}
Then $\cos\theta^\prime=\dfrac{s_{i}^{\prime} s_{j}^{\prime T}}{\Vert s_{i}^{\prime}\Vert\Vert s_{j}^{\prime T}\Vert}= \dfrac{-\alpha}{1+(n-2)\alpha}=\alpha^{\prime}$. Since $\alpha\alpha^{\prime}<0$, then the result is obtained.
\end{proof}
With regard to \eqref{ninv}, matrix $\beta^{-1/2}S^{-1}$ is row-wise equiangular, so $\beta^{-1/2}S^{-T}\in \text{EM}^n_{\alpha^{\prime}}$. for $n=2$, $\alpha^{\prime}=-\alpha$ and $\theta^{'}=\pi -\theta$ (Figure \ref{fig3}).
\begin{figure}
\centering
\includegraphics[scale=.22]{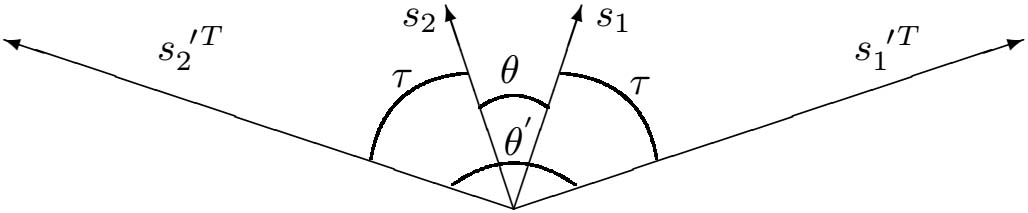}
\caption{\scriptsize{Inverse of the matrix $[s_1,s_2]$ is $[s_{1}^{~\prime},s_{2}^{~\prime}]^{T}, \text{where}~\theta +\tau =\frac{\pi}{2}$ and $\theta+\theta^{'} =\pi$.}}
\label{fig3}
\end{figure}
\begin{corollary}
If $\alpha^{\prime},\beta$ are defined as \eqref{alphbet}, then $G_\alpha G_{\alpha^{\prime}}=(1/\beta)I_n$~. \label{invG}
\end{corollary}
\begin{proof}
It is obvious from Proposition \ref{invS}.
\end{proof}
We give some examples of computing the inverse of equiangular matrices using \ref{invS}.
\begin{example}
SR-decomposition for the $4\times 4$ Hilbert matrix \textit{H} with $\theta=\pi/3$ is as follows
\\ $S=\left[\footnotesize{\begin{array}{cccc}0.8381&-0.0336&~~0.3939 &~~0.2788\\0.4191& ~~0.5921&-0.2572&~~0.4381\\0.2794&~~0.5977& ~~0.4062&-0.3031\\0.2095&~~0.5396&~~0.7834& ~~0.7991\end{array}}\right]$ and $R=\left[\footnotesize{\begin{array}{cccc}1.1932&0.6021&0.3998&0.2980\\0~\qquad& 0.1369&0.1426&0.1318\\0~ \qquad&0~\qquad&0.0076& 0.0117\\0~\qquad&0~\qquad&0~\qquad&0.0002 \end{array}}\right]$. We have $\alpha=1/2$, ${\alpha^{\prime}}=-1/4$ and $\beta=8/5$, then $S^{-1}=1.6~G_{-0.25}\left[\footnotesize {\begin{array}{cccc}~~0.8381&~~0.4191&~~0.2794 &0.2095\\-0.0336&~~0.5921&~~0.5977&0.5396\\ ~~0.3939&-0.2572&~~0.4062&0.7834 \\~~0.2788&~~0.4381&-0.3031&0.7991 \end{array}}\right]$.\qquad\qquad\qquad\qquad\qquad\qquad $\lozenge$
\end{example}
\begin{example}
SR-decomposition for the Identity matrix $I_4$ with $\theta=\pi/4$ is as follows\\ $S=\left[\footnotesize{\begin{array}{cccc}1&0.7071&0.7071&0.7071\\0&0.7071 &0.2929&0.2929\\0&0~\qquad&0.6436& 0.1885\\0&0~\qquad&0~\qquad&0.6154 \end{array}}\right]$ and $R=S^{-1}=\left[\footnotesize{\begin{array}{cccc} 1&-1\quad\qquad&-0.6436~~&-0.4760\\ 0&1.4142&-0.6436~~&-0.4760\\0&0~\qquad &1.5538&-0.4760\\0&0~\qquad&0~\qquad &~~1.6250 \end{array}}\right]$. By rounding we have $S\in \text{EM}^4_{0.7071}$ and $(0.6154)R^T\in \text{EM}^4_{-0.2929}$. Note that there is only one upper triangular equiangular matrix with positive entries with respect to $\alpha$. \qquad\qquad\qquad\qquad\qquad \qquad\qquad\qquad\qquad \qquad\qquad\qquad\qquad\qquad\qquad\quad $\lozenge$
\end{example}
\begin{example}
SR-decomposition for the orthogonal matrix $Q=\left[\footnotesize{\begin{array}{ccc}~~3/7&-2/7&~~6/7\\~~6/7&~~3/7 &-2/7\\-2/7&~~6/7&~~3/7\end{array}} \right]$ with $\theta=\pi/3$ is as follows
\\$S=\left[\footnotesize{\begin{array}{ccc}~~0.4286&-0.0332&0.8317\\~~0.8571 &~~0.7997&0.3190\\-0.2857&~~0.5995& 0.4545\end{array}}\right]$ and $R=\left[\footnotesize{\begin{array}{ccc}1& -0.5774&-0.4082\\0&~~1.1547&-0.4082\\ 0&0~~~~~&~~1.2247\end{array}}\right].$
  In regard to \eqref{alphbet}, $\beta^{-1/2}R$ is row-equiangular: $RR^T =S^{-1}S^{-T}=G_ \alpha ^{-1}=\beta G_{\alpha^{\prime}}$, with $\alpha=1/2$. \label{ExQR} \qquad\qquad\qquad\qquad\qquad\qquad \qquad\qquad\qquad\qquad\quad $\lozenge$
\end{example}
Now we discuss the eigenvalues of equiangular matrices. Actually, we present lower and upper bounds for the eigenvalues of an Equiangular matrix $S$ which is relative to the eigenvalues of its corresponding matrix $G_\alpha$~.
\begin{lemma}
Suppose that $G_\alpha$ is the Gram matrix of a given matrix $S\in \text{EM}^n_\alpha$. The eigenvalues of $G_\alpha$ are $1-\alpha$ and $1+(n-1)\alpha$. \label{Leig}
\end{lemma}
\begin{proof}
Since $G_\alpha e=\alpha e e^{T}e+(1-\alpha)I_n e=(1+(n-1)\alpha)e$, then $(1+(n-1)\alpha,e^{T})$ is an eigenpair of $G_\alpha$. Let $x=[x_1,\ldots,x_n]^T$, with $x_1+\ldots+x_n=0$, so
$G_\alpha x=\alpha ee^{T} x+(1-\alpha) I_n x=(1-\alpha)x$. Thus, $1-\alpha$ is the second eigenvalue of $G_\alpha$, with the algebraic multiplicity $n-1$. Therefore, $\sigma(G_\alpha)=\{1-\alpha,1+(n-1)\alpha\}$. 
\end{proof}
As noted in \eqref{P_s,t}, a set of equiangular vectors $s_1,\ldots,s_n$ can be embedded into the positive coordinate axes so that $n-1$ entries of all of them are the same as $|t|\in(0,\tfrac{1}{\sqrt{n}})$ and last one is $s=\sqrt{1-(n-1)t^2}$. Since $\alpha=2ts+(n-2)t^2$, then it can be shown that
\begin{equation}
s=\tfrac{\sqrt{(1-\alpha)(n-1)(n-2)+n+2(n-1)\sqrt{(1-\alpha)(1+(n-1)\alpha)}}}{n} \quad ,\quad t=\tfrac{\sqrt{\alpha n+2(1-\alpha -\sqrt{(1-\alpha)(1+(n-1)\alpha)})}}{n}, \label{s,t1}
\end{equation}
For this reason, $s\in(\frac{1}{\sqrt{n}},1)$ and the matrix $S$ with these vectors is positive definite with positive eigenvalues. $S$ can be rewritten as $sG_{t/s}$ and from Lemma \ref{Leig} $\sigma(sG_{t/s})=\{ s-t,s+(n-1)t\}$. We let $\bar{S}_\alpha=sG_{t/s}$. It is notable that $\bar{S}_\alpha$ is the unique principal square root of $G_\alpha$; i.e., $\bar{S}_\alpha={G_ \alpha}^{1/2}$ \cite{Higham}. Since $\bar{S}_\alpha,S\in \text{EM}^n_\alpha$, then there exists an orthogonal matrix $Q$ so that $S=Q\bar{S}_\alpha$. Since $\bar{S}_\alpha$ is positive definite, $S$ is nonsingular and $\bar{S}_\alpha =(S^T S)^{1/2}$, then the last equality is the ``polar decomposition" of $S$ \cite{Higham1}. This equality can be interpreted as a transformation of the orthogonal matrices to the equiangular ones and vise versa ($Q=S\bar{S}_\alpha^{-1}$).\\
If $(\mu,x)$ is an eigenpair of $\bar{S}_\alpha$, then $(\mu^2,x)$ is an eigenpair of $G_\alpha$. So $\sigma(\bar{S}_\alpha)=\{\sqrt{1-\alpha},\sqrt{1+(n-1)\alpha}\}=\{s-t,s+(n-1)t\}$ and $s,t$ are as follows
\begin{equation}
s=\frac{\sqrt{1+(n-1)\alpha}+(n-1)\sqrt{1- \alpha}}{n}\quad ,\quad t=\frac{\sqrt{1+(n-1)\alpha}- \sqrt{1-\alpha}}{n}\cdot \label{s,t2}
\end{equation}
\begin{example}
For the Gram matrix $G_{1/2}={\footnotesize\left[\begin{array}{ccc}1&1/2&1/2\\1/2&1&1/2\\1/2 &1/2&1\end{array}\right]}$, the rounded form of the square roots of $G_{1/2}$ is dependent on scalars $s,t$ as follows
\begin{align}
\sqrt{G_{1/2}}&={\footnotesize \left[\begin{array}{ccc}0.9428&0.2357&0.2357\\0.2357& 0.9428&0.2357\\0.2357&0.2357 &0.9428\end{array}\right]},~ \text{for}~~0<t<\tfrac{1}{\sqrt{n}}~,\nonumber
\\\sqrt{G_{1/2}}&={\footnotesize \left[\begin{array}{ccc}0&0.7071&0.7071\\0.7071 &0&0.7071\\0.7071&0.7071&0 \end{array}\right]},~\text{for}~~ \tfrac{1}{\sqrt{n}}<t\leq\tfrac{1}{\sqrt{n-1}}.
\end{align}
Since $\sigma(\bar{S}_{1/2})=\{s-t,s+(n-1)t\}$, then the eigenvalues of first matrix are $0.7071,1.4142$ and those of the second one are $-0.7071,1.4142$. Since $\bar{S}_{1/2}$ must be positive definite and $s>0$, then the first one is $\bar{S}_{1/2}$. \qquad $\lozenge$\label{psquare}
\end{example}
In the next theorem, a lower and upper bounds for eigenvalues of equiangular matrices are presented.
\begin{theorem}
Let $(\lambda,x)$ be an eigenpair of an equiangular matrix $S\in \text{EM}^n_\alpha$ with $\Vert x\Vert =1$ then the following bounds hold:
\begin{equation}
\lambda _{\text{min}}(\bar{S}_\alpha)\leq\vert \lambda \vert\leq \lambda_{\text{max}} (\bar{S}_\alpha),
\end{equation}
where $\lambda_{min}$ and $\lambda_{max}$ stand for the minimum and maximum eigenvalues of $\bar{S}_\alpha$, respectively.
\end{theorem}
\begin{proof}
Since $Sx=\lambda x$, and $x^{*}S^{T}=\lambda^{*}x^{*}$, then $x^{*}S^{T}Sx=\lambda \lambda^{*} x^{*}x=\vert\lambda\vert^{2}\Vert x\Vert^{2}=\vert\lambda\vert^{2}$. On the other hand
\begin{align}
x^{*} S^{T}Sx &=x^{*}G_\alpha x=x^ {*}(\alpha ee^T+(1-\alpha)I)x=\alpha (\sum_{i=1} ^{n} x_i) (\sum_{i=1}^{n}x^{*}_i)+(1-\alpha)\Vert x\Vert^{2}\nonumber\\&=\alpha\big|\sum_{i=1}^{n}x_i\big|^{2}+1-\alpha=\alpha\vert e^Tx \vert^{2}+1-\alpha ~,
\end{align}
where  $e$ is the vector of ones. Taking these equalities together implies 
\begin{equation}
\vert\lambda\vert =\sqrt{\alpha ~\vert e^T x \vert^{2}+1-\alpha}~.\label{eig}
\end{equation}
Equation \eqref{eig} describes the relationship between eigenvalues and eigenvectors of $S$. Maximum of $\vert e^T x\vert$ is attained if $x=\pm\frac{e}{\Vert e\Vert}$ which is an eigenvector of $\bar{S}_\alpha$. So $\vert\lambda\vert\leq\sqrt{\alpha n+1-\alpha}=\sqrt{1+(n-1)\alpha} =\lambda_{\text{max}}\bar{S}_\alpha$ if $0<\alpha<1$.
Likewise, minimum of $\vert e^T x\vert$ is attained if $e^{T}x=0$ which in this case $x\in\ker(e^{T}x)$. Also, $\vert\lambda\vert\geq \sqrt{1-\alpha} =\lambda_{\text{min}}\bar{S}_\alpha$ if $0<\alpha<1$. The same result holds for the case of $\frac{-1}{n-1}<\alpha<0$. 
\end{proof} 
\begin{proposition}
The condition number of any matrix $S\in \text{EM}^n_\alpha$ relative to $2-$norm is equal to $\sqrt{1+\dfrac{n\alpha}{1-\alpha}}$ if $0<\alpha<1$ and $\sqrt{1+\dfrac{n\vert\alpha\vert}{1-(n-1)\vert\alpha\vert}}$ if $\frac{-1}{n-1}<\alpha<0$.
\end{proposition}
\begin{proof}
Since $\Vert S\Vert_2 =\sqrt{\lambda_{\text{max}}(S^T S)}$ and $\Vert S^{-1}\Vert_2 =\sqrt{\lambda_{\text{max}}(SS^T)^{-1}}=\dfrac{1}{\sqrt{\lambda_{\text{min}}(S^T S)}}$, then $\kappa_2 (S)=\sqrt{\dfrac{\lambda_{\text{max}}(S^T S)}{\lambda_{\text{min}}(S^T S)}}$. So for any case of $\alpha$ the result is obvious.
\end{proof}
Note that $S$ converges to ill-conditioning as $\alpha\rightarrow 1$ or $\alpha\rightarrow\frac{-1}{n-1}$.
\section{Some matrix factorization} \label{schur}
\begin{proposition}
For $A\in\mathbb{R}^{n\times n}$, there exists $S\in \text{EM}^n_\alpha$ so that $S^{-1}AS=T$ is a block upper triangular, with $1\times 1$ and $2\times 2$ blocks on its diagonal. The eigenvalues of $A$ are the eigenvalues of diagonal blocks of $T$. The $1\times 1$ blocks correspond to real eigenvalues, and the $2\times 2$ blocks to pairs of complex conjugate eigenvalues.
\label{STiS}
\end{proposition}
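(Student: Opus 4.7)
The plan is to reduce the proposition to the real Schur decomposition of $A$ by means of the $SR$-factorization furnished by Theorem \ref{SR}. I would start from the real Schur decomposition $A = QTQ^T$, where $Q\in\mathbb{R}^{n\times n}$ is orthogonal and $T$ is real quasi-upper triangular, i.e.\ block upper triangular with $1\times 1$ and $2\times 2$ diagonal blocks, the latter carrying the complex conjugate pairs of eigenvalues of $A$ and the former the real ones.

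Next, I would apply Theorem \ref{SR} to $Q$ for the prescribed $\alpha\in(0,1)$: since $Q$ has full rank, there exist $S\in S_\alpha\subset\mathbb{R}^{n\times n}$ and an upper triangular $R\in\mathbb{R}^{n\times n}$ (with positive diagonal entries $r_{ii}=\Vert\hat{v}_i\Vert>0$, hence invertible) such that $Q=SR$. Because $Q$ is orthogonal, $Q^T=Q^{-1}=R^{-1}S^{-1}$, so
\begin{equation*}
A = Q T Q^T = S\,R\,T\,R^{-1}\,S^{-1},
\end{equation*}
which rearranges to $S^{-1}AS = R\,T\,R^{-1}$. Being a similarity, this transformation preserves the spectrum of $T$, which coincides with the spectrum of $A$.

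The only substantive step remaining, and the one I would expect to be the main (though mild) obstacle, is to verify that $R\,T\,R^{-1}$ retains the quasi-triangular block pattern of $T$: conjugating by an upper triangular matrix does not a priori preserve the $1\times 1$/$2\times 2$ structure. I would settle this with a direct index computation. Expanding
\begin{equation*}
(RTR^{-1})_{ij}=\sum_{k,l} R_{ik}\,T_{kl}\,(R^{-1})_{lj},
\end{equation*}
the upper triangularity of $R$ and $R^{-1}$ forces $k\geq i$ and $l\leq j$, while the quasi-triangularity of $T$ forces $k\leq l+1$. For $i\geq j+2$ these conditions are jointly unsatisfiable, so $(RTR^{-1})_{ij}=0$; for the sub-diagonal entries $i=j+1$ only the term $k=i$, $l=j$ survives, giving $(RTR^{-1})_{i,i-1}=(R_{ii}/R_{i-1,i-1})\,T_{i,i-1}$. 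Thus the sub-diagonal entries of $RTR^{-1}$ are nonzero at exactly the same positions as those of $T$, so $RTR^{-1}$ is quasi-upper triangular with the same block structure as $T$.

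Finally, the eigenvalue statement is immediate from block triangularity: the characteristic polynomial of a block triangular matrix factors as the product of those of its diagonal blocks, so the real eigenvalues of $A$ appear in the $1\times 1$ diagonal blocks and each complex conjugate pair appears as the spectrum of a $2\times 2$ diagonal block, completing the proof.
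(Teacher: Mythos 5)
Your proof is correct and follows essentially the same route as the paper: take the real Schur form $A=Q\Lambda Q^T$, factor $Q=SR$ via Theorem \ref{SR}, and observe that conjugation by the upper triangular $R$ preserves the quasi-triangular block structure. Your explicit index computation for $(RTR^{-1})_{ij}$ is a more careful justification of the structure-preservation step than the paper's one-line remark, but the argument is the same.
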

\begin{proof}
From the Real Schur form $A=Q\Lambda Q^T$, where $Q$ is an orthogonal matrix and $\Lambda$ is a block upper triangular. From SR decomposition, we have $Q=SR$, where $S\in \text{EM}^n_\alpha$ and $R$ is upper triangular. Then $A=Q\Lambda Q^T=SR\Lambda R^{-1} S^{-1}=STS^{-1}$. Therefore, $T=R\Lambda R^{-1}$ is a block upper triangular, whose blocks are conformable with those of $\Lambda$.
\end{proof}
We want to know which matrices have equiangular eigenvectors. If for some special matrix $A$, there exists a matrix $S\in \text{EM}^n_\alpha$ so that $S^{-1}AS=T=\text{diag}(t_1,\ldots,t_n)$, then $A$ has $n$ equiangular eigenvectors. We can provide a test to check the existence of the equiangular eigenvectors for a square matrix in the next theorem. First, we intoduce the upper triangular equiangular matrices.
\begin{lemma}
There is a unique triangular equiangular $n\times n$ matrix in terms of $\alpha\in(\frac{-1}{n-1},1)$ as follows
\begin{equation}
\hat{S}=\left[\footnotesize{\begin{array}{ccccc}1&\alpha &\alpha &\cdots &\alpha\\0&\sqrt{1-\alpha^2} &\frac{\alpha(1-\alpha)}{\sqrt{1-\alpha^2}}&\cdots & \frac{\alpha(1-\alpha)}{\sqrt{1-\alpha^2}}\\0&0&\ddots\qquad &~&\vdots\\\vdots &\vdots &\ddots &~&~ \\0&0&\cdots &~&~\end{array}}\right] \label{shat}
\end{equation}
which is obtained from the SR decomposition of the Identity matrix and its entries satisfy the following
\begin{itemize}
\item $\hat{s}_{11}=1,$
\item $\hat{s}_{i(i+1)}=\cdots =\hat{s}_{in}=\hat{s}_{ii}-\dfrac{1-\alpha}{\hat{s}_{ii}},$
\item $\hat{s}_{ii}^2=1-(\hat{s}_{1i}^2+ \hat{s}_{2i}^2+\cdots+\hat{s}_{(i-1)i}^2).$
\end{itemize}
\end{lemma}
\begin{proof}
Clearly $\hat{s}_{11}=1$. Since $\hat{s}_1^T\hat{s}_i=\alpha$, then $\hat{s}_{12}=\cdots=\hat{s}_{1n} =\dfrac{\alpha}{\hat{s}_{11}}$. Also, $\hat{s}_i^T\hat{s}_j=\alpha$ $(i<j)$, therefore, by induction
\begin{equation}
\hat{s}_{i(i+1)}=\hat{s}_{i(i+2)}=\cdots =\hat{s}_{in}=\frac{\alpha -(\hat{s}_{12}^2+\hat{s}_{23}+\cdots +\hat{s}_{(i-1)i}^2)}{\hat{s}_{ii}}\cdot \label{sij}
\end{equation}
On the other hand, $\Vert \hat{s}_i\Vert=1$, and then $(\hat{s}_{1i}^2+\cdots +\hat{s}_{(i-1)i}^2)=1-\hat{s}_{ii}^2$. Also, from \eqref{shat}, $\hat{s}_{i(i+1)}=\hat{s}_{ij}$, where $i=1,\ldots,n-2,~j=i+2,\ldots,n$. So the equation \eqref{sij} can be written more simply as follows
\begin{align}
\hat{s}_{i(i+1)}=\hat{s}_{i(i+2)}=\cdots =\hat{s}_{in}&=\frac{\alpha -(\hat{s}_{1i}^2+\hat{s}_{2i}+\cdots +\hat{s}_{(i-1)i}^2)}{\hat{s}_{ii}}=\frac{\alpha -(1-\hat{s}_{ii}^2)}{\hat{s}_{ii}}\nonumber\\&=\hat{s} _{ii}-\frac{1-\alpha}{\hat{s}_{ii}},
\end{align}
where this equation is used for computation of $i$th row.
\end{proof}
\begin{theorem}
Let $A\in\mathbb{R}^{n\times n}$ is nonsingular, with the Schur form $A=QTQ^T$ where $[T]_{ij}=t_{ij}$. Also for a pair of entries like $t_{ii}\neq t_{(i+1)(i+1)}$, there is an $\alpha\in(\frac{-1}{n-1},1)$ satisfies in $\frac{\hat{s}_{i(i+1)}}{\hat{s}_{(i+1)(i+1)}}=\frac{t_{i(i+1)}}{t_{(i+1)(i+1)}-t_{ii}}$, so that $\hat{s}_{i(i+1)},~\hat{s}_{(i+1)(i+1)}$ are followed from the upper triangular $\hat{S}\in \text{EM}^n_\alpha$ in \eqref{shat}. Furthermore the equality $T\hat{S}=\hat{S}\text{diag}(T)$, holds as noted in Proposition \ref{STiS}. Then the column vectors of $S=Q\hat{S}$ form the eigenvectors of $A$.
\end{theorem}
\begin{proof}
For two $t_{ii}\neq t_{(i+1)(i+1)}$ which satisfy in the hypothesis, we consider the entry $(i,i+1)$ in two sides of $T\hat{S}=\hat{S}\text{diag}(T)$. Now it can be checked that $A=Q\hat{S}\text{diag}(T)\hat{S}^{-1}Q^T$, then the result is yielded.
\end{proof}
For a $S\in \text{EM}^n_\alpha$, since $S^{-1}SS^{T}S= G_\alpha$, then $SS^T$ is similar to $G_\alpha$. From Lemma \ref{Leig}, $\sigma (SS^T)=\{1-\alpha ,1+(n-1)\alpha\}$, with the algebraic multiplicity $n-1$ at $1-\alpha$. Moreover, from Proposition \ref{Tinv},
\begin{equation}
(SS^T)^{-1}=S^{-T}S^{-1}=\beta(\beta^{-1/2}S^{-T} \beta^{-1/2}S^{-1})=\beta(\tilde{S} \tilde{S}^T), \label{rowe}
\end{equation}
where $\tilde{S}=\beta^{-1/2}S^{-T}\in \text{EM}^n_{\alpha^{\prime}}$. Now, assume that $A\in\mathbb{R}^{n\times n}$ is a symmetric matrix with two eigenvalues $\lambda_1,~\lambda_2=\lambda_1 +n(1-\lambda_1)$ with the algebraic multiplicity $n-1$ at $\lambda_1$, where $0<\lambda_1<1<\lambda_2$. Taking $\alpha=1-\lambda_1$ implies $\sigma (A)=\{1-\alpha ,1+(n-1)\alpha\}$. Therefore, for any $S\in EM^n_\alpha$, $A$ is orthogonally similar to $SS^T$. Then from the Schur form, $A=Q(SS^T)Q^T$, where $Q$ is an orthogonal matrix. Thus $A=(QS)(QS)^T=\tilde{S}\tilde{S}^T$, where $\tilde{S}\in \text{EM}^n_\alpha$. Now, the following Lemma is concluded.
\begin{lemma}
Let $A\in\mathbb{R}^{n\times n}$ is symmetric with two nonzero distinct eigenvalues $\lambda_1$, $\lambda_2$ where $\lambda_1\lambda_2 >0$ by the algebraic multiplicity $n-1$ at $\lambda_1$. Then there is a nonzero $r\in\mathbb{R}$ so that $A$ can be factorized as $rSS^T$ uniquely, where $S$ is an equiangular matrix.\label{LeiS}
\end{lemma}
\begin{proof}
There are two cases:
\begin{enumerate}
\item $\vert\lambda_1\vert<\vert\lambda_2\vert$\\\\
The values $\alpha=\tfrac{\lambda_2-\lambda_1}{\lambda_2-\lambda_1+n\lambda_1}\in(\frac{-1}{n-1},1)$ and $r=\tfrac{\lambda_2-\lambda_1+n\lambda_1}{n}$ are satisfy in the equations $r(1-\alpha)=\lambda_1$ and $r(1+(n-1)\alpha)=\lambda_2$. Now the desired eigenvalues $\lambda_1,\lambda_2$ are obtained. As noted before, $A$ can be factorized as $rSS^T$, where $S\in \text{EM}^n_\alpha$.
\item $\vert\lambda_2\vert<\vert\lambda_1\vert$\\\\
Since $A^{-1}$ has two eigenvalues $\lambda_1^{-1},\lambda_2^{-1}$, then from the previous case $A^{-1}=r^{\prime}\hat{S}\hat{S} ^T$, with the corresponding $r^{\prime},\alpha^\prime$. Then from \eqref{rowe}, $A=\dfrac{1}{r^\prime}(\hat{S}\hat{S}^T)^{-1}=\dfrac{\beta}{r^\prime}(SS^T)=rSS^T$, where $S\in \text{EM}^n_{\alpha^\prime}$ with $ \alpha^\prime =\frac{-\alpha}{1+(n-2)\alpha}$ and $r=\beta/r^{\prime}$.
\end{enumerate}
\end{proof}
\begin{theorem} 
(Generalization of the symmetric Schur form) Given a symmetric matrix $A\in\mathbb{R}^{n\times n}$ with at most $n-2$ zero eigenvalues and distinct nonzero eigenvalues. Then there is a $S\in \text{EM}^n_\alpha$, with $\alpha$ in a neighborhood of zero and a diagonal $D=\text{diag}(d_1,\ldots,d_n)$ so that $A=SDS^T$. Furthermore, if $\sigma(A)=\{\lambda_1,\ldots ,\lambda_n\}$, then $\sum d_i=\sum \lambda_i$. \label{SDST}
\end{theorem} 
\begin{proof}
Without loss of generality, assume that $A$ is nonsingular with distinct eigenvalues. Since for $A=\text{diag}(\lambda_1,\ldots ,\lambda_k,0,\ldots,0)$ if $\text{diag}(\lambda_1,\ldots ,\lambda_k)=S_kD_kS^T_k$, where $D_k=\text{diag}(d_1,\ldots ,d_k)$, then one can obtain the decomposition $A=SDS^T$ as follows
\begin{equation}
A=\left[\footnotesize{\begin{array}{c:c}
\lambda _1\qquad &~\\\ddots &0\\ \qquad\lambda _k&~\\ \hdashline ~&~\\0& 0 \\ \end{array}}\right]= \left[\footnotesize{\begin{array}{c:c}
~ &~\\S_k&~\\ ~&\tilde{S}_{n-k} \\ -------&~\\0 &~ \end{array}}\right] \left[\footnotesize{\begin{array}{c:c}
d_1\qquad &~\\\ddots &0\\ \qquad d_k&~\\ \hdashline ~&~\\0& 0 \\ \end{array}}\right] \left[\footnotesize{\begin{array}{c:c}
~ &~\\S_k&~\\ ~&\tilde{S}_{n-k} \\ -------&~\\0 &~ \end{array}}\right]^T,
\end{equation}
where $S_k$ is extended by deriving $\tilde{S}_{n-k}$ from the vectors $e_{k+1},\ldots ,e_n$ via Equiangular Algorithm.\\
Now, if there exists a diagonal matrix $D$ so that $\bar{S}_\alpha D\bar{S}_\alpha$ is orthogonally similar to $A$ as $A=P\bar{S}_\alpha D\bar{S}_\alpha P$, then the proof is coended: since $P\bar{S}_\alpha\in EM_\alpha^n$, then we set $S=P\bar{S}_\alpha$, so that $A=SDS^T$.
\\For computing $D$ we can write $\bar{S}_\alpha D\bar{S}_\alpha=\bar{S}_\alpha D{\bar{S}_\alpha}^2{\bar{S}_\alpha} ^{-1}=\bar{S}_\alpha DG_{\alpha}{\bar{S}_\alpha}^{-1}$, which indicates that $DG_{\alpha}$ must be similar to $\Lambda$. So the characteristic polynomials of them are the same. We discuss the characteristic polynomial of $DG_{\alpha}$ as follows
\begin{align}
\text{det}(xI-DG_{\alpha})&=\text{det} \footnotesize{\begin{bmatrix}x-d_{1}&-\alpha d_{1}&\cdots &-\alpha d_{1}\\-\alpha d_{2} &x-d_{2}&~&\vdots\\\vdots &~&\ddots &~\\-\alpha d_{n} &\cdots &~&x-d_{n}\end{bmatrix}} =(-1)^{n}\alpha ^{n}(\prod_{i=1}^{n}d_{i})~\text{det} \footnotesize{ \begin{bmatrix}\tfrac{x-d_{1}}{-\alpha d_{1}}&1&\cdots &1\\1&\tfrac{x-d_{2}}{-\alpha d_{2}}&~&\vdots\\\vdots &~&\ddots &1\\1&\cdots &1&\tfrac{x-d_{n}}{-\alpha d_{n}} \end{bmatrix}}\nonumber\\&=(-1)^{n}\alpha ^{n}(\prod_{i=1}^{n}d_{i})~\text{det}(\text{diag}(\tfrac{x-d_{1}}{-\alpha d_{1}}-1,~\tfrac{x-d_{2}}{-\alpha d_{2}}-1,\dots ,\tfrac{x-d_{n}}{-\alpha d_{n}}-1)+ee^T) \cdot \label{detGn1}
\end{align}
For the last determinant we use the Sherman-Morrison formula as follows
\begin{align}
\text{det}(xI-DG_{\alpha})=&(-1)^{n}\alpha ^{n}\prod_{i=1}^{n}d_{i}\left(\prod_{i=1}^{n}(\tfrac{x-d_{i}}{-\alpha d_{i}}-1)\right)\left(1+\sum_{i=1}^{n}\tfrac{-\alpha d_i}{x-(1-\alpha)d_i}\right)\nonumber\\=&(-1)^{n}\alpha ^{n}\prod_{i=1}^{n}d_{i}\left[\prod_{i\neq n}(\tfrac{x-d_{i}}{-\alpha d_{i}}-1)+\cdots +\prod_{i\neq 1}(\tfrac{x-d_{i}}{-\alpha d_{i}}-1)+\prod_{i=1}^n (\tfrac{x-d_{i}}{-\alpha d_{i}}-1)\right]\nonumber\\=& (-1)^{n}\alpha ^{n}\prod_{i=1}^{n}d_{i}~\frac{1}{(-1)^{n}\alpha ^{n}(\prod_{i=1}^{n}d_{i})}~[(-\alpha d_{n}\prod_{i\neq n}(x-d_{i}(1-\alpha))\nonumber\\&- \cdots-\alpha d_{1}\prod_{i\neq 1}(x-d_{i}(1-\alpha))+\prod_{i=1}^{n}(x-d_{i}(1-\alpha))]=\cdots
\end{align}
\begin{align}
=&x^{n}+(-\alpha +\alpha -1)(\sum_{i=1}^{n}d_{i})x^{n-1}+(-2\alpha (\alpha -1)+(\alpha -1)^2)(\sum_{1\leq i< j\leq n}d_{i}d_{j})x^{n-2}\nonumber\\&+\cdots +(-(n-1)\alpha (\alpha -1)^{n-2}+(\alpha -1)^{n-1})(\sum _{1\leq i_{j}\leq n}d_{i_{1}}\cdots d_{i_{n-1}})x+(-n\alpha (\alpha -1)^{n-1}+(\alpha -1)^n)d_{1}\cdots d_{n}\nonumber\\=&x^{n}- (\sum_{i=1}^{n} d_{i})x^{n-1}+(1-\alpha ^2)(\sum_{1\leq i< j\leq n}d_{i}d_{j}) x^{n-2}-\cdots -(\alpha -1)^{n-2} ((n-2)\alpha +1)(\sum_{1\leq i_{j}\leq n}d_{i_{1}}\cdots d_{i_{n-1}}) x\nonumber\\&-(\alpha -1)^{n-1}(1+(n-1)\alpha) d_{1}\cdots d_{n}\cdot
\end{align}
On the other hand, the characteristic polynomial of $\Lambda$ is illustrated as follows
\begin{align}
p(x)=&(x-\lambda_{1})(x-\lambda _{2})\cdots(x-\lambda_{n})=x^{n}-(\sum _{i=1}^{n}\lambda_{i})x^{n-1}+ (\sum_{1\leq i< j\leq n}\lambda_{i}\lambda_{j})x^{n-2} \nonumber\\&-\cdots +(-1)^{n-1}(\sum_{1\leq i_{j}\leq n}\lambda_{i_{1}}\cdots \lambda_{i_{n-1}})x +(-1)^{n}\lambda _{1}\cdots \lambda_{n}\cdot
\end{align}
Since det$(xI-DG_\alpha)=\text{det}(xI-\Lambda)$, then
\begin{align}
&\sum_{i=1}^{n}d_{i}=\sum_{i=1}^{n} \lambda_{i}=c_1 \Rightarrow \text{trace}(D)=\text{trace}(\Lambda),\nonumber \\& \sum_{1\leq i< j\leq n}d_{i}d_{j}=\frac{1}{1-\alpha ^2}\sum_{1\leq i< j\leq n}\lambda_{i}\lambda_{j}=c_2,\nonumber\\&\qquad \vdots\nonumber\\& \sum_{1\leq i_{j}\leq n}d_{i_{1}}\cdots d_{i_{n-1}}=\frac{1}{(1-\alpha)^{n-2}(1+(n-2)\alpha)}\sum_{1\leq i_{j}\leq n}\lambda_{i_{1}}\cdots \lambda_{i_{n-1}}=c_{n-1},\nonumber\\&d_{1}\cdots d_{n}=\frac{1}{(1-\alpha)^{n-1}(1+(n-1)\alpha)}\cdot\lambda_{1}\cdots \lambda_{n}=c_{n}\cdot \label{d_i}
\end{align}
Therefore $d_{1},~d_{2},\ldots,d_{n}$ are the roots of the following polynomial 
\begin{equation}
g(x)=x^{n}-c_{1}x^{n-1}+\cdots+(-1)^{n-1}c_{n-1} x+ (-1)^{n}c_{n}\cdot \label{g_n}
\end{equation}
Then the necessary condition for the implementation of this factorization is that the roots of $g(x)$ are all real because $SDS^T$ must be symmetric. Note that the scalar $\alpha$ in the coefficients of $g(x)$ can be considered as the perturbation in those of $p(x)$. Since the roots of $p(x)$ are distinct, then by the ``intermediate value theorem", there is a $\alpha$ in a neighborhood of zero for which the roots of $g(x)$ are all ``real" and possibly distinct. One can use an argument from the complex analysis \cite{Ahlfors}: the eigenvalues of $A$ are the continuous function of $A$, even though they are not differentiable.
\end{proof}
One can provide a counter example that shows the eigenvalues of $A$ are not distinct: let $A=\text{diag}(0,1,1)$ then $D=\text{diag}(0,1+\sqrt{ \tfrac{\alpha ^2}{1-\alpha ^2}}i,1- \sqrt{\tfrac{\alpha ^2}{1-\alpha ^2}}i)$ which is not real and symmetric. So $A$ wont be factorized as $SDS^T$. In general, if $A$ is a factor of identity matrix: let $A=rI=SDS^T$, then $rS^{-1}S^{-T}=rG_\alpha ^{-1}=D$ that satisfies $G_\alpha$ is diagonal, which is a contradiction. Therefore, in this case, the corresponding $g(x)$ in \eqref{g_n} has at least two nonreal roots. Note that the distinction of the eigenvalues of $A$ is the sufficient condition, but not necessary. For example, if $A$ is symmetric by two nonzero eigenvalues with the algebraic multiplicity $n-1$ at one of them, then by the Lemma \ref{LeiS}, the decomposition $A=rSS^T$ is possible and it suffices to let $D=rI$.
\begin{proposition}
For $\alpha\in(\frac{-1}{n-1},1)$ and nonzero $r$, the following polynomial has at least two nonreal roots.\label{root}
\begin{equation}
g_{n}(x)=x^n-nrx^{n-1}+\dfrac{\binom{n}{2}r^2}{1-\alpha ^2}x^{n-2}-\dfrac{\binom{n}{3}r^3}{(1-\alpha)^2(1+2\alpha)}x^{n-3} +\cdots +(-1)^n\dfrac{r^n}{(1-\alpha)^ {n-1}(1+(n-1)\alpha)}\cdot
\end{equation} \label{Pro}
\end{proposition}
\begin{proof}
Since the Theorem \ref{SDST} does not hold for $A=rI$, then there is no symmetric $D$ that satisfies $A=SDS^T$. From \eqref{d_i}, the coefficients of $g(x)$ are computed as follows
\begin{equation}
c_1=\sum _{i=1}^n r =nr,\quad c_2=\frac{1}{1-\alpha ^2}\sum _{1\leq i<j\leq n}r^2 =\dfrac{\binom{n}{2}r^2}{1-\alpha ^2}~,\quad\cdots\quad ,\quad c_n =\frac{r^n}{(1-\alpha)^{n-1}(1+(n-1)\alpha)}\cdot
\end{equation}
\end{proof}
\begin{example}
One can check the accuracy of Proposition \eqref{root} for the cases $n=2,3$: for $n=2$ we have $g_2 (x)=x^2 -2rx+\frac{r^2}{1-\alpha ^2}$, so the roots of $g_2$ are nonreal: $x_{1,2}=r\pm \frac{r\alpha}{\sqrt{1-\alpha ^2}}i$ and $g_2>0$. For $n=3$, $g_3(x)=x^3 -3rx^2 +\frac{3r^2}{1-\alpha^2}x-\frac{r^3}{(1-\alpha)^2 (1+2\alpha)}$, then $g_3^\prime(x)=3x^2-6rx +\frac{3r^2}{1-\alpha^2}$. Clearly, $g_3^\prime=3g_2>0$, so $g_3$ has one real root. In general, $g_n^\prime=ng_{n-1}$. By induction, $g_{n-1}$ has at most $n-3$ real roots and by intermediate value theorem $g_n$ has at most $n-2$ real roots. \qquad\qquad\qquad\qquad\qquad\qquad\qquad\qquad\qquad\qquad\qquad\qquad\qquad\qquad \qquad $\lozenge$
\end{example}
\begin{example}
For $\Lambda =\text{diag}(1,2,3)$, from Theorem \ref{SDST}, one can find a bound for $\alpha$ for which the factorization $\Lambda=SDS^T$ holds for some $S\in \text{EM}^n_\alpha$. So we can write $p(x)=x^3 -6x^2 +11x-6$ and $g(x)=x^3 -6x^2 +\frac{11}{1-\alpha^2}x-\frac{6}{(1-\alpha)^2(1+2\alpha)}$. Considering MATLAB $\mathtt{roots}$ function, the roots of $g$ are all real if $\alpha\leq 0.1843$, by roundoff.\qquad $\lozenge$
\end{example}
As noted before, Lemma \ref{LeiS} shows that Theorem \ref{SDST} also holds for a special class of symmetric matrices with two nonzero eigenvalues by the multiplicity $n-1$ at one of them. In these cases, the corresponding diagonal matrix $D$ is a factor of identity matrix. 
\begin{example}
For $\Lambda =\text{diag}(1,1,2)$, Lemma \ref{LeiS} implies that $r=4/3$ and $\alpha =1/4$ so that $D$ is $(4/3)I_3$. Also, from \eqref{s,t2} $s=0.9856$ and $t=0.1196$ by roundoff. Therefore, $\bar{S}_\alpha=0.9856G_{0.1213}$ and Theorem \eqref{SDST} results\\
$P=\left[\footnotesize{\begin{array}{ccc} ~~0.8059&-0.1310&0.5774\\-0.2895&~~ 0.7634& 0.5774\\-0.5164&-0.6325&0.5774 \end{array} }\right]$ and $S=P^T \bar{S}_\alpha=\left[\footnotesize{\begin{array}{ccc}~~0.6979&-0.2507&-0.4472\\-0.1134& ~~0.6612&-0.5477\\~~0.7071&~~0.7071& ~~0.7071\end{array} }\right]$. Then $\Lambda=SDS^T$. \qquad ~~$\lozenge$
\end{example}
\begin{example}
Let $r=1-\alpha$ and on the assumption that $d=\frac{\alpha}{1-\alpha}>\tfrac{-1}{n}$, the general term of $g_n$ is $c_k =\frac{\binom{n}{k}r^k}{(1-\alpha)^{k-1}(1+(k-1)\alpha)}= \frac{\binom{n}{k}(1-\alpha)}{1+(k-1)\alpha}=\frac{\binom{n}{k}}{1+dk}$. Therefore $g_n(x)= x^n-\sum_{k=1}^n \frac{\binom{n}{k}}{a_k}x^{n-k}$ has at least two nonreal roots, where $a_k$ is an arithmetic sequence with the initial term $a_1=1+d>1$ and the common difference $d$. $\lozenge$
\end{example}
\begin{example}
Let $r=1+(k-1)\alpha$ and on the assumption that $d=\frac{\alpha}{1-\alpha}>\tfrac{-1}{n}$, the general term of $g_n$ is $c_k=\frac{\binom{n}{k}r^k}{(1-\alpha)^{k-1}(1+(k-1)\alpha)}= \binom{n}{k}(\frac{1+(k-1)\alpha}{1-\alpha})^{k-1}=\binom{n}{k}(1+dk)^{k-1}$. Therefore, $g_n(x)= x^n-\sum_{k=1}^n \binom{n}{k} a_k^{n-1}x^{n-k}$ has at least two nonreal roots, where $a_k$ is an arithmetic sequence with the initial term $a_1=1+d>1$ and the common difference $d$.\qquad \qquad \qquad \qquad \qquad \qquad \qquad \qquad \qquad \qquad \qquad \qquad \qquad \qquad \qquad \qquad \qquad \qquad \qquad \qquad \qquad ~ $\lozenge$
\end{example}
Another counter example is the case that $A$ has an eigenvalue with multiplicity less than $n-1$, where $n$ is the number of nonzero eigenvalues. Without loss of generality, assume that $A=\text{diag}(\lambda_1,\lambda_2,\ldots,\lambda_n),~(n>3)$ is nonsingular, where $\lambda_1=\cdots=\lambda_i$ $(i<n-1)$. We can prove it in the following lemma.
\begin{lemma}
If the nonzero eigenvalues of $A\in\mathbb{R}^{n\times n}$ is $\lambda_1,\lambda_2,\ldots,\lambda_n,~(n>3)$ where the $n-k$ of $\lambda_i$'s are the same as $\lambda_{i_1}=\cdots=\lambda_{i_{n-k}}$ $(2\leq k\leq n-2)$. Then there are no $S\in \text{EM}^n_\alpha$ and real diagonal $D$ satisfies $A=SDS^T$. \label{naghz}
\end{lemma}
\begin{proof}
Without loss of generality, assume that $A=\text{diag}(\lambda_1,\ldots,\lambda_n)$ with the nonzero eigenvalues so that $\lambda_{k+1}=\cdots=\lambda_n =\lambda$. Let us consider for a contradiction and suppose that there are $S$ and $D$ satisfying the hypothesise of the problem, so $S^{-1}AS^{-T}=D$. From the equations \eqref{ninv} and \eqref{sipsjp} and its outcome in section \ref{equi3}, $S^{-1}$ has the equiangular rows with the norm $\sqrt{\beta}$ and the cosine of the angle $\alpha^\prime$. Then analogous to \eqref{rowe}, there exists $\tilde{S}\in \text{EM}^n_{\alpha^\prime}$ so that $S^{-1}=\sqrt{\beta}\tilde{S}^T$. Therefore, $\tilde{S}^T\text{diag}(\lambda_1,\ldots,\lambda_k,\lambda,\cdots,\lambda)\tilde{S}=\frac{1}{\beta}D$. The subtraction of this equality from the equation $\lambda\tilde{S}^T I_n\tilde{S}=\lambda G_{\alpha^\prime}$ is illustrated as follows
\begin{equation}
\begin{bmatrix}(\lambda _1-\lambda)\tilde{s}_{11}&\ldots &(\lambda _{k}-\lambda)\tilde{s}_{k1}\\ \vdots &\ddots &\vdots\\ (\lambda _1-\lambda)\tilde{s}_{1n} &\ldots &(\lambda _{k}-\lambda)\tilde{s}_{kn} \end{bmatrix}\cdot \begin{bmatrix}\tilde{s}_{11}&\ldots &\tilde{s}_{1n}\\ \vdots &\ddots &\vdots\\ \tilde{s}_{k1} &\ldots &\tilde{s}_{kn} \end{bmatrix}= \footnotesize{\begin{bmatrix}\tfrac{d_1}{\beta}-\lambda &-\lambda\alpha ^\prime &\cdots &-\lambda\alpha ^\prime\\-\lambda\alpha ^\prime &\tfrac{d_2}{\beta}-\lambda &~&\vdots\\\vdots &~&\ddots &~\\-\lambda\alpha ^\prime &\cdots &~ &\tfrac{d_n}{\beta}-\lambda  \end{bmatrix}}\cdot\label{STAS}
\end{equation}
The right hand matrix is like the mentioned matrix in \eqref{detGn1} and its rank must be at most $k$. On the other hand, if $\alpha$ converges to zero, then the diagonal entries of the above matrix tend to $d_i-\lambda$ and all off-diagonals tend to zero. One can consider a subsequence of the $\alpha$'s close to zero in such a way that for each $i$, $\frac{d_i}{\beta}-\lambda\neq 0$ and also $\vert (n-1)\alpha^\prime\lambda\vert<\vert \frac{d_i}{\beta}-\lambda\vert$. Then there exists an $\frac{-1}{n-1}<\alpha<1$ for which 
\begin{equation}
0<\vert (n-1)\alpha^\prime\lambda\vert<\vert \frac{d_i}{\beta}-\lambda\vert\cdot
\end{equation}
Now, by the Gerschgorin Theorem \cite{Horn,Meyer}, the eigenvalues of the mentioned matrix are nonzero. So its rank will be $n>k$ which is a contradiction.
\end{proof}
Now, the outcome of the mentioned counter example can be represented as a theorem in relation to the general form of the polynomials with nonreal roots.
\begin{theorem}
The real scalers $\lambda_1,\lambda_2,\ldots,\lambda_n$ and $\frac{-1}{n-1}<\alpha<1,~(n>3)$ are given so that there are two cases for $\lambda_i$'s: either all of them are equal or the $i$ of them are equal $(2\leq i\leq n-2)$. Then the following polynomial has at least two nonreal roots. $(n\geq 2)$
\begin{equation}
f_{n}(x)=x^n-(\sum _{i=1}^n\lambda _i)x^{n-1}+\dfrac{\sum _{1\leq i<j\leq n}\lambda _i\lambda _j}{(1-\alpha)(1+\alpha)}x^{n-2}+\cdots +(-1)^n\dfrac{\lambda _1\ldots\lambda _n}{(1-\alpha)^ {n-1}(1+(n-1)\alpha)}\cdot \label{fnx}
\end{equation}
Moreover, all real polynomials of degree $n$, which have the nonreal roots, can be illustrated as the above form. 
\end{theorem}
\begin{proof}
From the Proposition \ref{root} and Lemma \ref{naghz}, the first part is proven. For the next part regarding the Lemma \ref{LeiS} and Theorem \ref{SDST} when the $\lambda_i$'s are distinct or $n-1$ of them are equal, then for an $\alpha$ all $d_i$'s are real. So we can say that if the scalers $d_i$ with at least two of them are nonreal, being the roots of $f_n(x)$, then the $\lambda_i$'s must be the same as mentioned in the hypothesise of the theorem.
\end{proof}
\section{Doubly equiangular matrices}\label{doubequi}
In this section, we study the special case of equiangular matrices which have not only equiangular columns, but also equiangular rows. The mentioned matrix $\bar{S}_\alpha$, in section \ref{equi3}, is of this type, but we want to find their general form.
\begin{definition}
The matrix $\bar{S}$ is called ``doubly equiangular matrix" $(\text{DEM}^n_\alpha)$ if $\bar{S},\bar{S}^T \in \text{EM}^n_\alpha$ for nonzero $\alpha\in(\frac{-1}{n-1},1)$. \label{ddoub}
\end{definition} 
	Note that a doubly equiangular matrix $\bar{S}$ is normal because $\bar{S}\bar{S}^T=\bar{S}^T\bar{S}$. So $\bar{S}$ is orthogonally diagonalizable. From Schur form $\bar{S}=Q\Lambda Q^T$ so that $\Lambda$ is a blocked diagonal matrix with $1\times 1$ and $2\times 2$ blocks. We have $\bar{S}^T\bar{S}=G_\alpha =Q\Lambda^T\Lambda Q^T$, where $\Lambda_\alpha=\Lambda^T\Lambda$ is a diagonal matrix with some eigenvalues corresponding to the $1\times 1$ blocks of $\Lambda$. With regarding to the Lemma \ref{Leig}, $\Lambda_\alpha$ has an eigenvalue $1+(n-1)\alpha$ in a $1\times 1$ block with the corresponding eigenvector $e$. So its corresponding eigenpair in $\bar{S}$ is $(\sqrt{1+(n-1)\alpha},e)$. Since $e$ is an eigenvector of $\bar{S}$, then the row sums of $\bar{S}$ are the same. The same reasoning is true for $\bar{S}^T$. So $e$ is also an eigenvector of $\bar{S}^T$ which implies that the column sums of $\bar{S}$ are the same. Then $\bar{S}e=\bar{S}^Te=(1+(n-1)\alpha)^ {1/2}e$. It implies that the vector $e$ is both the right and left eigenvector of $\bar{S}$. For the case of $\alpha=0$, we present a special definition similar to the previous one.
\begin{definition}
The orthogonal matrix $\bar{Q}$ is called ``doubly orthogonal matrix" if $e$ is the eigenvector of $\bar{Q}$. Also, $\text{DOM}^n$ denotes the set of all doubly orthogonal matrices.
\end{definition}
In this case the condition of being a normal matrix is not sufficient for being $e$ the eigenvector of matrix $Q$. Since $G_0=I$, then any vector can be an eigenvector of $1$ so that the eigenvectors of $I$ and accordingly $Q$ are not restricted to $e$. Although for any orthogonal matrix $Q$, $Q^TQ=QQ^T$, based on definition, it may not necessarily be doubly orthogonal, unless the vector $e$ is its eigenvector.\\
Now, we want to derive a doubly equiangular (doubly orthogonal) matrix $\bar{S}$ $(\bar{Q})$ via an equiangular (orthogonal) matrix $S$ $(Q)$ so that its column sums vector has the same direction as $e$. $(\bar{S}e=\lambda e$ or $\bar{Q}e=\lambda e)$.
\begin{theorem}
Let $S\in \text{EM}^n_\alpha$, then one can obtain a doubly equiangular (orthogonal) matrix $\bar{S}\in \text{DEM}^n_\alpha ~(\text{DOM}^n)$ from $S$ as follows
\begin{equation}
\bar{S}=(I-\frac{2uu^T}{\Vert u\Vert ^2})S\label{dequ}
\end{equation}
where $u=(S-\sqrt{1+(n-1)\alpha}~ I)e$. \label{dthe}
\end{theorem}
\begin{proof}
It suffices to construct a Householder transformation like $H$ so that it transforms the column sums vector of $S$ to a vector which lies in the same direction as $e$. The subtraction of normalized column sums vector of $S$ and $\bar{S}$ is $\frac{1}{\sqrt{n(1+(n-1)\alpha)}}Se-\frac{1}{\sqrt{n}}e$ which must be in the same direction as $u$. Then $\bar{S}$ is obtained by multiplication of the Householder matrix $I-\frac{2uu^T}{\Vert u\Vert ^2}$ and $S$ from the left. It can be checked that $\bar{S}\bar{S}^T=G_\alpha$ and $\bar{S}e=\bar{S}^Te=(1+(n-1)\alpha)^ {1/2}e$.
\end{proof}
As noted in Example \ref{psquare}, the sufficient condition for producing a doubly equiangular matrix with nonnegative entries is that $\frac{n-2}{n-1}\leq\alpha<1$, so that the lower bound of $\alpha$ is held for $\sqrt{G_\alpha}$ with $\tfrac{1}{\sqrt{n}}<t\leq\tfrac{1}{\sqrt{n-1}}$. Actually, $\sqrt{G_\alpha}$ in this case is a symmetric matrix, but not the principal root of $G_\alpha$. The general form of $\sqrt{G_\alpha}$ is illustrated as follows
\begin{equation}
\sqrt{G_\alpha}=\left[\begin{array}{cccc}0&\frac{1}{\sqrt{n-1}}&\ldots &\frac{1}{\sqrt{n-1}}\\\frac{1}{\sqrt{n-1}}&0&\ldots &\frac{1}{\sqrt{n-1}}\\\vdots &\vdots &\ddots &\vdots\\\frac{1}{\sqrt{n-1}}&\frac{1}{\sqrt{n-1}}&\ldots &0\end{array}\right]~,\qquad\alpha =\frac{n-2}{n-1}\cdot
\end{equation}
With regard to the matrix $\bar{S}_\alpha=G_\alpha^{1/2}$ that has positive entries, we deduce the mentioned condition for $\alpha$ is not necessary. Actually, matrix $(1+(n-1)\alpha)^ {-1/2}\bar{S}$ where $\bar{S}\in \text{DEM}^n_\alpha$ is a doubly stochastic matrix with the mentioned condition and a quasi doubly stochastic matrix without it.\\
One can provide an algorithm named DEA which is followed from the Theorem \ref{dthe} to derive a matrix $S\in \text{DEM}^n_\alpha~(Q\in \text{DOM}^n)$ from the decomposition SR (QR) of a nonsingular matrix $A$.
\begin{algorithm} 
\caption{.~This algorithm produces a doubly equiangular (orthogonal) matrix $\bar{S}~(\bar{Q})$ with $\alpha\in(\frac{-1}{n-1},1)$ from a nonsingular matrix $A$ of size $n$.}
\label{algo:dequ}
\begin{algorithmic}[1]
\STATE $A=SR~;$~\% SR (QR) decomposition with $\alpha\in(\frac{-1}{n-1},1)$
\STATE $u=(S-\sqrt{1+(n-1)\alpha}~I)e~;$
\STATE $\bar{S}=(I-\frac{2uu^T}{\Vert u\Vert ^2})S~;$
\end{algorithmic}
\end{algorithm}
\begin{example}
For $A=\left[\footnotesize{\begin{array}{cccc}1&1/2 & 1/3 &1/4\\1/2 & 1/3 &1/4&1/5\\1/3 &1/4&1/5&1/6\\1/4&1/5&1/6&1/7 \end{array}}\right]$, by applying DEA on $A$, the rounded $\bar{S}$ with $\alpha=2/3$ is obtained as follows
\begin{equation}
\bar{S}=\left[\footnotesize{\begin{array}{cccc}0.8517&0.3048& 0.3774&0.1981\\ 0.3976&0.3942&0.1205&0.8198\\ 0.2399&0.1863&0.8189&0.4869\\0.2429 &0.8468&0.4152&0.2273 \end{array}}\right]~,
\end{equation}
where $\bar{S}\bar{S}^T=\bar{S}^T\bar{S}= G_{2/3}$ and $\bar{S}e=\bar{S}^T e=\sqrt{3}e$. \qquad\qquad\qquad\qquad\qquad\qquad\qquad\qquad\qquad\qquad\qquad\qquad\qquad $\lozenge$
\end{example}
\begin{example}
The orthogonal matrix $Q=\left[\footnotesize{\begin{array}{ccc}~~3/7&-2/7&~~6/7\\~~6/7&~~3/7 &-2/7\\-2/7&~~6/7&~~3/7\end{array}} \right]$, in Example \ref{ExQR}, is a cyclic doubly orthogonal matrix so that $Qe=Q^Te=e$. \qquad\qquad\qquad\qquad\qquad\qquad\qquad\qquad\qquad\qquad\qquad\qquad\qquad\qquad \qquad\qquad $\lozenge$
\end{example}
\begin{example}
The matrix $Q=\left[\footnotesize{\begin{array}{cccc}-1/2&~1/2&~1/2&~1/2\\-1/2&~1/2 &-1/2&-1/2\\-1/2&-1/2&~1/2 &-1/2\\-1/2&-1/2&-1/2&~1/2 \end{array}} \right]$ is an orthogonal matrix. From DEA, $Q$ is transformed to the following doubly orthogonal matrix
\begin{equation}
\bar{Q}=\left[\footnotesize{\begin{array}{cccc}-1/2&1/2&~1/2&~1/2\\~1/2&~5/6& -1/6&-1/6\\~1/2&-1/6&~5/6 &-1/6\\~1/2&-1/6&-1/6&~5/6 \end{array}} \right],
\end{equation}
so that $\bar{Q}e=\bar{Q}^Te=e$. \qquad\qquad\qquad\qquad\qquad\qquad\qquad\qquad\qquad\qquad\qquad\qquad\qquad\qquad \qquad\qquad\qquad\qquad\quad ~$\lozenge$
\end{example}
From the later discussions and examples, one can conclude the doubly orthogonal matrices are the quasi doubly stochastic matrices.
\begin{proposition}
The Gram matrix $G_\alpha$ is commutable with the family of all $\bar{S}\in \text{DEM}^n_{\alpha^\prime}$. \label{comm}
\end{proposition}
\begin{proof}
The proof stems from the fact that the vector $e$ is both the right and left eigenvector of $\bar{S}$.
\end{proof}
\begin{proposition}
Let $\bar{S}_1\in \text{DEM}^n_{\alpha_1}$ and $\bar{S}_2\in \text{DEM}^n_{\alpha_2}$, with $\alpha_1,\alpha_2\in(\frac{-1}{n-1},1)$. Then $\bar{S}_1\bar{S}_2$ is a factor of a doubly equiangular matrix, so it is normal.
\end{proposition}
\begin{proof}
From Proposition \ref{comm}, we have
\begin{equation}
\bar{S}_1\bar{S}_2(\bar{S}_1\bar{S}_2)^T =\bar{S}_1\bar{S}_2 \bar{S}_2^T\bar{S}_1^T=\bar{S}_1 G_{\alpha_2}\bar{S}_1^T=G_{\alpha_2} \bar{S}_1\bar{S}_1^T= G_{\alpha_2}G_{\alpha_1}=cG_{\alpha ^\prime},
\end{equation}
where $c=1+(n-1)\alpha_1 \alpha _2$ and $\alpha^\prime=\tfrac{\alpha_1 +\alpha_2 +(n-2)\alpha_1 \alpha_2}{c}$. Proving $(\bar{S}_1\bar{S}_2)^T\bar{S}_1\bar{S} _2=cG_{\alpha^\prime}$ is similar the last one.
\end{proof}
In the category of normal matrices, there are symmetric (hermitian), skew-symmetric (skew-hermitian) and orthogonal (unitary) matrices. Now, we can add the new type of normal matrices ``doubly equiangular (orthogonal) matrices" to that category. However, there exist normal matrices that are not included in any of them.
\section{Conclusion}
In this paper, we introduce an algorithm that produces an equiangular matrix. It then shows some of its results in the matrix factorization.

\end{document}